\documentclass[12pt,english,a4paper]{smfart}

\usepackage[T1]{fontenc}
\usepackage{lmodern}
\usepackage{smfthm}
\usepackage[headings]{fullpage}
\usepackage{amssymb}
\usepackage{tikz-cd}

\newcommand{\Spec}{\operatorname{Spec}}
\newcommand{\W}{(\mathbb{Z}/2\mathbb{Z})^g\rtimes S_g}
\newcommand{\GR}{\operatorname{OGr}(g,V)}
\newcommand{\CC}{\mathbb{C}}
\newcommand{\FF}{\mathcal{F}}
\newcommand{\II}{\mathcal{I}}
\newcommand{\JJ}{\mathcal{J}}
\newcommand{\HH}{\operatorname{H}}
\newcommand{\PP}{\mathbb{P}^{2^g-1}}
\newcommand{\DD}{\operatorname{dim}}

\newcommand{\OO}{\mathcal{O}}

\renewcommand{\phi}{\varphi}

\newcommand{\Y}[1]{%
\begin{tikzpicture}[
    scale=0.18,
    baseline={(0,-0.45)},
    line width=0.3pt
]
\foreach \row [count=\y from 0] in {#1} {
    \foreach \x in {1,...,\row} {
        \filldraw[
            fill=black!10,
            draw=black
        ]
        (\x-1,-\y) rectangle (\x,-\y-1);
    }
}
\end{tikzpicture}%
}

\title{Number of equations defining the Kummer varieties of hyperelliptic curves}

\author{Nathan Partouche}
\email{nathan.partouche@ens-paris-saclay.fr}

\begin{document}

\begin{abstract}
 The Jacobian of an hyperelliptic curve $C$ of genus $g$ over $\CC$ is equipped with a symmetric theta divisor $\Theta$ that defines a principal polarization. The line bundle $\OO(2\Theta)$ defines an embedding of the Kummer variety $K$ inside the projective space $\PP$, which factorizes inside a certain orthogonal Grassmannian. We describe a minimal free resolution of $K$ inside this Grassmannian using geometric representation theory.
As a consequence, we obtain a formula for the number of quartic equations defining $K$ in the projective space $\PP$.

\end{abstract}

\maketitle

\tableofcontents

\setlength{\baselineskip}{18pt}

\section*{Introduction}
\label{intro}

Let $(A,\lambda:A\rightarrow A^{\vee}) $ be a principally polarized indecomposable $g$-dimensional abelian variety over $\CC$. Let $L$ be a symmetric line bundle inducing the polarization $\lambda$. Then $L^{\otimes 2}$ is independent of the choice of $L$, and choosing a basis of global sections of $L^{\otimes 2}$ defines a morphism $A\rightarrow\PP$ which factors through an embedding of the Kummer variety $K = J/\langle \pm 1\rangle$. 

The line bundle $L^{\otimes2}$ on $A$ descends to a line bundle $M$ on the Kummer variety $K$. In this paper, we will study the equations defining the Kummer variety in the projective space $\PP$ regarding the closed immersion $i$.

Khaled in \cite[Theorem 3.2]{Kha93} showed that an arbitrary Kummer variety has its ideal sheaf of equations inside $\PP$ generated by quadrics, cubics, and quartics.

Bert van Geemen in \cite[Section 2.2]{Gee16} showed that generically there is no quadric vanishing on $K$ and that the map
\begin{align*}
    \HH^0(K,M)\otimes\HH^0(K,M)\to\HH^0(K,M^{\otimes2})
\end{align*}
is surjective.
The genericity condition is the non-vanishing of certain Theta nullwerte.

Ryuji Sasaki in \cite[Corollary 1.5]{Sas81} showed that, under the hypothesis that the multiplication map $\HH^0(K,M)\otimes\HH^0(K,M)\to\HH^0(K,M^{\otimes2})$ is surjective, the ample line bundle $M$ is projectively normal.
Combining these two results, we conclude that for a generic principally polarized abelian variety, no quadrics vanish on $K$, and that the ample line bundle $M$ on $K$ is normally generated.

We will now focus on a certain family of principally polarized indecomposable abelian varieties over $\CC$.
Let $C$ be an hyperelliptic curve of genus $g$ over $\CC$. The Jacobian $J_C$ of the hyperelliptic curve $C$ is equipped with a symmetric theta divisor $\Theta$ that defines a principal polarization. Each such hyperelliptic curve $C$ of this form gives rise to a principally polarized indecomposable abelian variety $(J_C,\OO(\Theta))$. For dimension $g$ equal to $1$ or $2$, every indecomposable $(A,\lambda)$ is of this form, and in general the locus of such hyperelliptic Jacobians defines a closed subscheme of dimension $2g-1$ inside the coarse moduli space of principally polarized abelian varieties (which has dimension $\frac{g(g+1)}{2}$). For dimension $g$ at least $3$, the multiplication map 
\begin{align*}
\HH^0(K,M)\otimes\HH^0(K,M)\to\HH^0(K,M^{\otimes2})
\end{align*}
is not surjective for hyperelliptic Jacobians. The family of hyperelliptic Jacobians does not fit in the generic picture. 

The Kummer variety with the line bundle $M$ (whose pullback is $\OO(2\Theta)$ on $J_C$) is not projectively normal, and there is a quadric vanishing on the Kummer variety, for $g\geq3$ as recorded in \cite[Section 2.9.3]{Kha95}.
This is in sharp contrast with the generic situation.

\subsection*{Main results}
Bert van Geemen \cite[Theorem 2]{Gee88} proved that the set of quadrics vanishing on the hyperelliptic Kummer variety $K$ defines an orthogonal Grassmannian $\operatorname{OGr}(g,2g+1)$. Inside this orthogonal Grassmannian, we have found the minimal free resolution of $K$. 

Denote by $\II$ the ideal sheaf of the closed immersion $i:K\to\PP$.
Jef Laga and Jack Thorne showed in \cite[Proposition 3.16]{LT25} that the Kummer variety $K$ is defined scheme-theoretically by quadratic and quartic equations. Let $Q$ be the vector space of quartics vanishing on $K$. If we take a quadratic equation vanishing on the Kummer variety and we multiply it with an arbitrary quadratic equation, we obtain a quartic vanishing on the Kummer variety. Let $Q_0$ be the subspace generated by quartics arising from quadrics.
Using the minimal free resolution of $K$ inside $\operatorname{OGr}(g,2g+1)$, one obtains the following theorem.

\begin{enonce*}{Theorem}
\label{theo}
For all $p\geq1$, the cohomology group $\HH^p(\PP,\II(4))$ vanishes, and there are
\begin{align*}
    \dim Q=\binom{2^g+3}{4}-\frac{8^g+2^g}{2}
\end{align*} linearly independent quartics defining the Kummer variety and \begin{align*}
    \dim Q/Q_0=\frac{2^g}{2g+1}\binom{2g+1}{g}\prod_{1\leq i < j\leq g}\frac{2g+4-i-j}{2g+2-i-j}-\frac{8^g+2^g}{2}
\end{align*} linearly independent quartics modulo quartics arising from quadrics.
\end{enonce*}

The formula for the number of linearly independent quartics defining the Kummer variety inside $\PP$ is also true in the generic case. This comes from the fact that the embedding $K\rightarrow\PP$ is projectively normal.

The geometry of the Kummer variety of low-genus hyperelliptic curves was worked on explicitly by mathematicians. A consequence of their work was that for low-genus hyperelliptic curves, the number of quadrics and quartics defining the Kummer variety was obtained. See the work of Michael Stoll in \cite[Theorem 2.5]{Sto17} for genus equals to $3$. The main theorem of the article gives a formula that works for every genus $g$. 

\subsection*{Strategy}
The closed immersion $K\to\PP$ can be factored, using the specific structure arising from the hyperelliptic curve, into two closed immersions $K\to\operatorname{OGr}(g,2g+1)$ and $\operatorname{OGr}(g,2g+1)\to\PP$. Using the equivariant geometry of the orthogonal Grassmannian, we obtain a minimal free resolution of the ideal sheaf of $K\to\operatorname{OGr}(g,2g+1)$. This minimal free resolution is composed of sheaves for which the cohomology can be computed with the Borel-Weil-Bott theorem. The spectral sequence associated with the hypercohomology will then prove the theorem.
This minimal free resolution is the cornerstone of the article. It encodes a substantial amount of information, including the formula for the number of quadrics, but its scope extends considerably beyond this result.
The idea of studying geometric objects related to abelian varieties using geometric representation theory also appears in \cite{GSW13}.

\subsection*{Organization}
In Section \ref{Technical tools}, we will review the tools required to obtain the proof of the theorem. In Section \ref{The Kummer embedding}, we will explain how the results of Laga and Thorne in \cite{LT25} give a minimal free resolution consisting of sheaves that will enable us to show the vanishing of $\HH^1(\PP,\II(4))$ using combinatorics. The last section, Section \ref{Combinatorics}, will contain the proof of the theorem.

\subsection*{Acknowledgements}
The research was carried out while the author was doing his master's internship at the University of Cambridge under the supervision of Jef Laga, to whom I am grateful for his guidance.

No artificial intelligence tools were used in the preparation of this work.
\section{Technical tools}\label{Technical tools}
\subsection{Equivariant sheaves}
We will briefly recall the definition of equivariant sheaves and prove a lemma. Let $X$ be a scheme over $\CC$, and let $G$ be a group scheme over $\CC$ acting on $X$. We denote by $p:G\times X\to X$ the projection, by $a:G\times X\to X$ the action, by $m:G\times G\to G$ the multiplication and by $p_{23}:G\times G\times X\to G\times X$ the projection on the last two factors.

\begin{defi}
    A quasi-coherent sheaf $\FF$ is called $G$-equivariant if there exist an isomorphism $\phi_\FF:a^*\FF\to p^*\FF$ satisfying the cocycle condition $(m\times1)^*\phi=p_{23}^*\phi\circ(1\times a)^*\phi$

    A morphism of quasi-coherent sheaves $u:\FF\to\mathcal{G}$ between $G$-equivariant sheaves is $G$-equivariant if $\phi_\mathcal{G}\circ a^*u=p^*u\circ\phi_\FF$.
\end{defi}

Now suppose we have another scheme $Y$, an open cover $\{U_i\}$ of $Y$, maps $f_i:U_i\to X$, and morphisms $g_{ij}:U_i\cap U_j\to G$ satisfying $a\circ(g_{ij},f_{j_{|U_i\cap U_j}})=f_{i_{|U_i\cap U_j}}$ and $m\circ(g_{ij_{|U_i\cap U_j\cap U_k}},g_{jk_{|U_i\cap U_j\cap U_k}})=g_{ik_{|U_i\cap U_j\cap U_k}}$.
 
Let $u:\FF\to\mathcal{G}$ be a morphism of $G$-equivariant sheaves on $X$; then we can glue $f_i^*\FF$, $f_i^*\mathcal{G}$ and $f_i^*u$ in a coherent way to obtain a morphism of quasi-coherent sheaves on $Y$, as we now explain.

We pull back the isomorphism $\phi_\FF$ defining the $G$-equivariant structure by $(g_{ij},f_j)$ to obtain an isomorphism $\psi_{ij}:f_i^*\FF_{|U_i\cap U_j}\to f_j^*\FF_{|U_i\cap U_j}$. 

\begin{lemm}
    \label{equiv}
    The $\psi_{ij}$ satisfies the Zariski cocycle condition. 
\end{lemm}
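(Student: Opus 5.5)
The plan is to reduce the Zariski cocycle condition $\psi_{jk}\circ\psi_{ij}=\psi_{ik}$ on $U_{ijk}=U_i\cap U_j\cap U_k$ to the cocycle condition for $\phi_\FF$, pulled back along a suitable morphism $U_{ijk}\to G\times G\times X$. First we fix conventions. On $U_{ij}$ the morphism $(g_{ij},f_j):U_{ij}\to G\times X$ satisfies $a\circ(g_{ij},f_j)=f_i$ and $p\circ(g_{ij},f_j)=f_j$. Hence $(g_{ij},f_j)^*\phi_\FF$ is an isomorphism $f_i^*\FF\to f_j^*\FF$, which is $\psi_{ij}$ up to the canonical identifications $(g_{ij},f_j)^*a^*\cong f_i^*$ and $(g_{ij},f_j)^*p^*\cong f_j^*$.

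The key step is to introduce, on $U_{ijk}$, the morphism
\begin{align*}
h=(g_{ij},g_{jk},f_k):U_{ijk}\to G\times G\times X .
\end{align*}
We then compute its compositions with the three maps appearing in the cocycle condition. First, $(1\times a)\circ h=(g_{ij},a\circ(g_{jk},f_k))=(g_{ij},f_j)$. Second, $p_{23}\circ h=(g_{jk},f_k)$. Third, $(m\times 1)\circ h=(m\circ(g_{ij},g_{jk}),f_k)=(g_{ik},f_k)$, using the hypothesis on the $g_{ij}$. (One should check that this hypothesis is stated so that $m(g_{ij},g_{jk})=g_{ik}$ with the order matching the action convention $a(g,a(g',x))=a(m(g,g'),x)$; if the orders are reversed, we swap the roles of the two $G$ factors in $h$.)

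Now pull back the cocycle identity $(m\times1)^*\phi=p_{23}^*\phi\circ(1\times a)^*\phi$ by $h$. By functoriality of pullback, $h^*(1\times a)^*\phi=(g_{ij},f_j)^*\phi=\psi_{ij}$, $h^*p_{23}^*\phi=\psi_{jk}$ and $h^*(m\times1)^*\phi=\psi_{ik}$, all restricted to $U_{ijk}$. Thus the identity becomes $\psi_{ik}=\psi_{jk}\circ\psi_{ij}$. For this to be literally valid, the sources and targets must match, for instance $h^*(1\times a)^*p^*\FF$ against $h^*p_{23}^*a^*\FF$; all such sheaves are identified with $f_j^*\FF$ via the canonical isomorphisms $(u\circ v)^*\cong v^*u^*$, and these identifications are compatible, by the standard coherence of pullback pseudofunctoriality.

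The only real obstacle is this bookkeeping. We must check that the canonical isomorphisms used to make sense of the cocycle condition on $G\times G\times X$ (implicitly $p\circ(1\times a)=a\circ p_{23}$, $a\circ(1\times a)=a\circ(m\times1)$, and so on) pull back under $h$ to the same identifications used in defining the $\psi$'s. We would handle it by working with the definition of pullback as a pseudofunctor and noting that every identification is a composite of the coherent isomorphisms $(uv)^*\cong v^*u^*$. Two such composites between the same pair of iterated pullbacks of $\FF$ along equal composite maps agree, so no sign or twist can appear. Taking $i=j$, the same argument combined with the unit (which follows from the cocycle condition and invertibility) shows $\psi_{ii}=\mathrm{id}$. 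This is not needed beyond the triple condition.
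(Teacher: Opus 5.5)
Your proposal is correct and is exactly the paper's argument: pull back the cocycle identity for $\phi_\FF$ along $h=(g_{ij},g_{jk},f_k)$ on $U_i\cap U_j\cap U_k$, and use $(1\times a)\circ h=(g_{ij},f_j)$, $p_{23}\circ h=(g_{jk},f_k)$ and $(m\times 1)\circ h=(g_{ik},f_k)$ to read off $\psi_{jk}\circ\psi_{ij}=\psi_{ik}$. Your remarks on the coherence of the canonical isomorphisms $(uv)^*\cong v^*u^*$ only make explicit the bookkeeping that the paper's one-line proof leaves implicit.
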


\begin{proof}
    It suffices to pull back the cocycle condition $(m\times1)^*\phi_\FF=p_{23}^*\phi_\FF\circ(1\times a)^*\phi_\FF$ by $(g_{ij_{|U_i\cap U_j\cap U_k}},g_{jk_{|U_i\cap U_j\cap U_k}},f_{k_{|U_i\cap U_j\cap U_k}})$.
    We obtain the desired Zariski cocycle condition $\psi_{jk_{|U_i\cap U_j\cap U_k}}\circ\psi_{ij_{{|U_i\cap U_j\cap U_k}}}=\psi_{ik_{{|U_i\cap U_j\cap U_k}}}$. 
\end{proof}

 We denote by $\psi'_{ij}$ the isomorphism in the case of $\mathcal{G}$, pulling back $\phi_\mathcal{G}\circ a^*u=p^*u\circ\phi_\FF$ by $(g_{ij},f_{j_{|U_i\cap U_j}})$ gives $\psi'_{ij}\circ f_i^*u_{| U_i\cap U_j}=f_j^*u_{|U_i\cap U_j}\circ\psi_{ij}$. We can then glue these $f_i^*u$.

Conceptually, the above data determines a map $g: Y\rightarrow[X/G]$, then $\FF$, $\mathcal{G}$ and $u$ descend to the quotient stack $[X/G]$. We are simply pulling these objects via the map $g$.

\subsection{Commutative algebra}
For the formalism of the Schur functor, we recommend \cite[Section 2]{Wey03} for a detailed treatment or \cite[Section 2.1]{GSW13} for a brief description of the definition and properties.

Let $X$ a scheme and $\mathcal{F}$ a locally free sheaf of rank $g$ on $X$.
Let $\lambda=(\lambda_1,\dots,\lambda_g)\in\mathbb{Z}^g$ such that $\lambda_1\geq\lambda_2\geq\dots\geq\lambda_{g-1}\geq\lambda_g$. If $\lambda_g$ is non negative, then we say that $\lambda$ is "nice".
We denote by $L_\lambda\FF$ (resp. by $K_\lambda\FF$) the Schur (resp. Weyl) functor associated with $\lambda$ applied to $\FF$. Let $\lambda'\in\mathbb{Z}^g$ denote the transpose of $\lambda$, that is, $\lambda'_i$ is the number of components of $\lambda$ that are at least $i$.
If the nice vector $\lambda$ is equal to its transpose $\lambda'$, then the Schur functor $L_\lambda\FF$ is isomorphic to the Weyl functor $K_\lambda\FF$.

The important point is that Schur functors are functorial and commute with base change. Hence, if we have a group scheme $G$ acting on the locally free sheaf $\FF$, then there is a natural $G$-equivariant structure on $L_\lambda\FF$. If we consider a Schur functor associated to a nice vector $\lambda=(\lambda_1,\dots,\lambda_g)$ with $\lambda_g$ non-zero, and apply it to a locally free sheaf of rank strictly less than $g$, then we obtain the zero sheaf. If $\lambda_g$ is zero, then $L_\lambda\FF=L_{(\lambda_1,\dots,\lambda_{g-1})}\FF$ and $K_\lambda\FF=K_{(\lambda_1,\dots,\lambda_{g-1})}\FF$.

The locally free sheaf $K_{(1,\dots,1)}\FF$ corresponds to the line bundle $\operatorname{det}\FF$, and one rule of the Schur calculus says that the tensor production $K_\lambda\FF\otimes\operatorname{det}\FF$ is equal to $K_{\lambda+(1,\dots,1)}\FF$.

Another important result is the Eagon-Northcott perfection theorem, giving conditions ensuring that the pullback of a minimal free resolution remains a minimal free resolution.
Let $I$ be an ideal of $\CC[X_i]_{1\leq i\leq r}$ such that $\CC[X_i]_{1\leq i\leq r}/I$ is Cohen-Macaulay. Let $F_\bullet$ be a $\CC[X_i]_{1\leq i\leq r}$-linear minimal free resolution of $I$.
Let $A$ be a Noetherian $\CC[X_i]_{1\leq i\leq r}$-algebra.

\begin{enonce*}{Theorem 1.3 (Eagon-Northcott perfection)}
    Suppose that $\DD A-\DD A/IA=r-\DD\CC[X_i]_{1\leq i\leq r}/I$, then $F_\bullet\otimes A$ is a minimal free resolution of $I\otimes A$.
\end{enonce*}
\begin{proof}
    See \cite[Theorem 3.5]{BV88}
\end{proof}

\subsection{Borel-Weil-Bott}
We will simply state the theorem for a particular flag variety. Jerzy Weyman's book \cite[Section 4]{Wey03} is a nice reference for the Borel-Weil-Bott theorem.

Let $(V,\psi)$ be a $2g+1$-dimensional $\CC$-vector space equipped with a non-degenerate symmetric bilinear form. 
Let $\operatorname{Gr}(g,V)$ be the usual Grassmannian of $g$-dimensional subspaces of $V$ over $\CC$ on which the group scheme $\operatorname{GL}(V)$ acts. Let $\FF_0$ be the universal $g$-dimensional subspace, which is a $g$-dimensional sub-vector bundle of $V\otimes\OO_{\operatorname{Gr}(g,V)}$. This vector bundle has a structure of $\operatorname{GL}(V)$-equivariant vector bundle. The locus where the map $\psi\otimes\OO_{\operatorname{Gr}(g,V)}:(V\otimes\OO_{\operatorname{Gr}(g,V)})\times (V\otimes\OO_{\operatorname{Gr}(g,V)})\to\OO_{\operatorname{Gr}(g,V)}$ vanishes on $\FF_0\times\FF_0$ is called the orthogonal Grassmannian $\GR$. This locus is stabilized by the group scheme $\operatorname{SO}(V)$. We  denote by $\FF$ the restriction of the vector bundle $\FF_0$ on $\GR$, which is endowed with a $\operatorname{SO}(V)$-equivariant structure induced by the $\operatorname{GL}(V)$-equivariant structure of $\FF_0$.

The group $\W$ acts on $\mathbb{R}^g$ by changing signs and permuting the components. Let $\rho$ denote the vector $(\frac{2g-1}{2},\frac{2g-3}{2},\dots,\frac{3}{2},\frac{1}{2})$. For $\sigma\in\W$ and $\lambda\in\mathbb{R}^g$, we will denote by $\sigma\cdot\lambda=\sigma(\lambda+\rho)-\rho$ the twisted action of $\W$ on $\mathbb{R}^g$. For $\sigma\in\W$, its length $l(\sigma)$ is defined as the sum of the minimal number of transpositions of the form $(i,i+1)$ needed to write the permutation of the $S_g$-component with the number of components equal to $1$ in the $(\mathbb{Z}/2\mathbb{Z})^g$-component. 

For a nice vector $\alpha\in\mathbb{Z}^g$, we denote by $V_\alpha$ the irreducible representation of $\operatorname{SO}(V)$ of highest weight $\alpha$.
Let $\beta=(\beta_1,\dots,\beta_g)\in\mathbb{Z}^g$ such that $\beta_1\geq\beta_2\geq\dots\geq\beta_{g-1}\geq\beta_g$.
Let $\gamma\in\mathbb{Z}^g$ be the vector $(-\beta_g,\dots,-\beta_1)$. The Borel-Weil-Bott theorem for $\GR$ is the following dichotomy allowing the computation of the cohomology of the $\operatorname{SO}(V)$-equivariant vector bundle $K_\beta\FF$.

\begin{enonce*}{Theorem 1.4 (Borel-Weil-Bott)}
    \label{BWB}
    One of two mutually exclusive possibilities occurs:
    \begin{enumerate}
        \item There exists $\sigma\in\W$ such that $\sigma\cdot\gamma=\gamma$, then for all $p\geq0$, the cohomology group $\HH^p(\GR,K_\beta\FF)$ vanishes.
        \item There exists a unique $\sigma\in\W$ such that $\sigma\cdot\gamma$ is nice, then for all $p\neq l(\sigma)$, the cohomology group $\HH^p(\GR,K_\beta\FF)$ vanishes and $\HH^{l(\sigma)}(\GR,K_\beta\FF)=V_{\sigma\cdot\gamma}$.
    \end{enumerate}
\end{enonce*}

What will be called the Borel-Weil-Bott procedure is the algorithm where we begin with a nice vector $\beta$, then do all the transformations, and try to find if we are in the first case or the second case of theorem 1.4.
The first step is to consider the vector $\gamma=(-\beta_g,\dots,-\beta_1)$, the second step is to compute $\gamma+\rho$, and the last step is to find $\sigma\in\W$ such that $\sigma\cdot\gamma$ is nice. Finding $\sigma\in\W$ such that $\sigma\cdot\gamma$ is nice is equivalent to find $\sigma\in\W$ such that $\sigma(\gamma+\rho)$ is nice with distinct positive components.

\subsection*{Examples}
It takes the following form for $\beta=(2,2)$.
\begin{align}
    (2,2)\rightarrow(-2,-2)\rightarrow(-\frac{1}{2},-\frac{3}{2})\text{, } \sigma=((1,1),(1,2)) \text{ and } l(\sigma)=3
\end{align} 

For $\beta=(1,1)$, it takes the following form.
\begin{align}
    (1,1)\rightarrow(-1,-1)\rightarrow(\frac{1}{2},-\frac{1}{2})\text{}
\end{align}
There is no $\sigma\in\W$ such that $\sigma\cdot\gamma$ is nice. This is because at the end of the second step, two components are equal up to the sign.

\begin{align}
    (0,0)\rightarrow(0,0)\rightarrow(\frac{3}{2},\frac{1}{2})\text{, } \sigma=((1,1),\operatorname{id}) \text{ and } l(\sigma)=0
\end{align}
\begin{align}
    (2,2,0)\rightarrow(0,-2,-2)\rightarrow(\frac{5}{2},-\frac{1}{2},-\frac{3}{2})\text{, }\sigma=((0,1,1),(2,3))\text{ and }l(\sigma)=3
\end{align}
\begin{align}
    (3,2,1)\rightarrow(-1,-2,-3)\rightarrow(\frac{3}{2},-\frac{1}{2},-\frac{5}{2})\text{, }\sigma=((0,1,1),(1,2)\circ(2,3))\text{ and }l(\sigma)=4
\end{align}
\begin{align}
    (3,3,2)\rightarrow(-2,-3,-3)\rightarrow(\frac{1}{2},-\frac{3}{2},-\frac{5}{2})\text{, }\sigma=((0,1,1),(2,3)\circ(1,2)\circ(2,3))\text{ and }l(\sigma)=5
\end{align}
\begin{align}
    (0,0,-2)\rightarrow(2,0,0)\rightarrow(\frac{9}{2},\frac{3}{2},\frac{1}{2})\text{, }\sigma=((0,0,0),\operatorname{id})\text{ and }l(\sigma)=0
\end{align}
\begin{align}
    (1,0,-1)\rightarrow(1,0,-1)\rightarrow(\frac{7}{2},\frac{3}{2},-\frac{1}{2})\text{, }\sigma=((0,0,1),\operatorname{id})\text{ and }l(\sigma)=1
\end{align}
\begin{align}
    (1,1,0)\rightarrow(0,-1,-1)\rightarrow(\frac{5}{2},\frac{1}{2},-\frac{1}{2})
\end{align}

We see that for a nice vector $(\beta_1,\dots,\beta_g)\in\mathbb{N}^g$, doing the Borel-Weil-Bott procedure on $(\beta_1,\dots,\beta_g)\in\mathbb{N}^g$ and $(\beta_1,\dots,\beta_g,0)\in\mathbb{N}^{g+1}$ will yield the same length.
These examples will serve during the article.

\subsection{Symmetric determinantal varieties}

Let us consider the $\CC$-scheme $s:X\rightarrow\CC$ of $g\times g$ symmetric matrices, which is the scheme $\Spec(\CC[X_{i,j}]_{1\leq i,j\leq g}/(X_{i,j}-X_{i,j})_{1\leq i\leq j\leq g})$. The group scheme $\operatorname{GL}_g$ acts on $X$ by congruence; at the level of points, we have $g\cdot M=gMg^t$ for $g\in\operatorname{GL}_g(T)$, $M\in X(T)$, and $T$ a $\CC$-scheme.
There is the universal symmetric matrix $(X_{i,j})_{1\leq i,j\leq g}\in M_{g\times g}(\Gamma(X,\OO_{X}))$. The locus where the universal matrix is of rank at most $1$ defines a closed subscheme $Z$ of $X$ called the symmetric determinantal variety of rank at most $1$. The ideal sheaf of $Z$ in $X$ will be denoted by $\JJ$.

The symmetric determinantal variety of rank at most $1$ is well studied; the coordinate ring of $Z$ is Cohen-Macaulay (see \cite[Theorem 6.3.1]{Wey03}). We also have an explicit description of a minimal free resolution of the ideal sheaf in terms of $\operatorname{GL}_g$-equivariant objects and $\operatorname{GL}_g$-equivariant differentials. 

Let $u\in\mathbb{N}$ be an integer, and $\alpha=(\alpha_1,\dots,\alpha_{2u})\in\mathbb{Z}^{2u}$ be nice. Let $|\alpha|$ be the sum of the components of $\alpha$. Let $\alpha'\in\mathbb{Z}^{2u}$ be the transpose of $\alpha$.
We define $\lambda(\alpha,u)$ to be the following nice vector $(2u+\alpha_1,\dots,2u+\alpha_{2u},\alpha'_1,\dots,\alpha'_{2u})\in\mathbb{Z}^{4u}$. For $i\geq1$, we denote by $I_{i}$ the set $\{\text{nice } \alpha|i=|\alpha|+2u^2-u\}$. Let $E=\CC^g$, so that $\operatorname{GL}_g$ acts on $E$. It defines a $\operatorname{GL}_g$-equivariant locally free sheave $\widetilde{E}$ of rank $g$ on $\operatorname{Spec}(\CC)$.
\begin{enonce*}{Theorem 1.5 (Resolution of $\JJ$)}
    For $i\geq1$, the $i$-th term $G_i$ of the minimal free resolution of $\JJ$ is 
    \begin{align*}
    \bigoplus_{u\geq 0}\bigoplus_{\alpha\in I_{i,u}}s^*(\widetilde{L_{\lambda(\alpha,u)}E}).
    \end{align*}
    For the $\operatorname{GL}_g$-equivariant structure induced by the $\operatorname{GL}_g$-action on $E$ and functoriality of $L_\lambda$, the differentials are $\operatorname{GL}_g$-equivariant.
\end{enonce*}
\begin{proof}
    See \cite[Theorem 6.3.1]{Wey03}
\end{proof}

\section{The Kummer embedding}\label{The Kummer embedding}
The embedding of the Kummer variety $i:K\to\PP$ (arising from a hyperelliptic curve) is easier to understand than in the general case, thanks to the results of Laga and Thorne.  They construct a $2g+1$-dimensional $\CC$-vector space $V$ equipped with a non-degenerate symmetric bilinear form. They showed in \cite[Section 3]{LT25} that $i$ factors through two closed immersions: $i':K\to\GR$ and $\Sigma:\GR\to\PP$. The second closed immersion is the spinor embedding, well studied by Henri Cartan and Claude Chevalley.

Laga and Thorne also construct \cite[Theorem 3.13]{LT25} a symmetric bilinear form $\psi:\FF\times\FF\to\OO_{\GR}$ such that $i'(K)$ is the locus where $\psi$ has rank at most $1$. Another important result is that the pullback by $\Sigma$ of $\OO(2)$ is $(\det\FF)^{\vee}$. One rule of Schur calculus says that the tensor product $K_\lambda\FF\otimes\det\FF$ is equal to $K_{\lambda+(1,\dots,1)}\FF$.

Let $\II'$ be the ideal sheaf associated with the closed immersion $i'$. The Kummer variety is defined scheme-theoretically by quartics \cite[Proposition 3.16]{LT25} inside the orthogonal Grassmannian $\Sigma(\GR)$ and the quadrics vanishing on the Kummer variety are the same as the quadrics vanishing on the orthogonal Grassmannian \cite[Proposition 3.17]{LT25}. 

The fact that the quadrics vanishing on the Kummer variety defines an orthogonal Grassmannian was already known before the paper of Laga and Thorne \cite{LT25}; see, for example, the following paper of Bert van Geemen \cite[Theorem 2]{Gee88}. The paper of Laga and Thorne \cite{LT25} mainly deals with making this much more systematic and explicit and applying it to arithmetic contexts.

Let $U$ be an affine open subset of $\GR$ that trivializes $\FF$. We then have an isomorphism $\phi_U:\FF_{|U}\to\OO_U^g$. Under this choice of coordinates, the matrix defined by $\psi$ gives rise to a morphism $f_U:U\to X$ such that the following diagram is cartesian.
\[\begin{tikzcd}
	{U\cap i'(K)} & Z \\
	U & {X}
	\arrow[from=1-1, to=1-2]
	\arrow[from=1-1, to=2-1]
	\arrow[from=1-2, to=2-2]
	\arrow["f_{U}",from=2-1, to=2-2]
\end{tikzcd}\].

The Kummer variety $K$ and this orthogonal Grassmannian $\GR$ are integral schemes of finite type over $\CC$. Their dimension can be computed via the dimension of any nonempty open subset. 

\begin{lemm}
    The dimensions of $X$ and $\GR$, as well as $Z$ and $K$, are the same. 
\end{lemm}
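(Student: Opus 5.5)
The plan is to compute all four dimensions directly and then compare them.

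The two dimensions coming from linear algebra are immediate. The scheme $X$ of $g\times g$ symmetric matrices is an affine space with coordinates $X_{i,j}$ for $i\leq j$, so $\DD X=\frac{g(g+1)}{2}$. The orthogonal Grassmannian $\GR$ of maximal isotropic subspaces of a $(2g+1)$-dimensional quadratic space is homogeneous under $\operatorname{SO}(V)$. Its dimension is $\DD\operatorname{SO}(2g+1)-\DD P$, which equals the number of positive roots not in the Levi of the maximal parabolic attached to the short simple root. That number is $g^2-\frac{g(g-1)}{2}=\frac{g(g+1)}{2}$. I can check this by a tangent space computation instead. The tangent space at an isotropic $W$ is $\operatorname{Hom}(W,V/W)$ constrained by isotropy. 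Using $V=W\oplus W^\vee\oplus L$ with $L$ a line, this space is $\wedge^2W^\vee\oplus (W^\vee\otimes L)$, of dimension $\frac{g(g-1)}{2}+g$. So $\DD X=\DD\GR$.

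For the second pair, $K=J_C/\langle\pm1\rangle$ is a quotient of a $g$-dimensional abelian variety by a finite group, so $\DD K=g$. The variety $Z$ of symmetric matrices of rank at most $1$ is the image of $\CC^g\to X$, $v\mapsto vv^t$. This map is finite onto its image, with fibres $\{\pm v\}$ over nonzero points. So $Z$ is irreducible of dimension $g$; equivalently, $Z$ is the affine cone over the Veronese image of $\mathbb{P}^{g-1}$. Hence $\DD Z=\DD K=g$.

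If the intended proof should instead use the cartesian diagram, for instance because one wants the codimension statement $\DD X-\DD Z=\DD U-\DD(U\cap i'(K))$ needed for the Eagon--Northcott perfection theorem, the cleanest route is still to compute the four numbers separately as above. Equality of dimensions and equality of codimensions then follow at once. The remark preceding the lemma lets us use any nonempty open $U$, and integrality of $K$ and $\GR$ guarantees $\DD(U\cap i'(K))=\DD K$ once $U$ meets $i'(K)$.

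The only mildly delicate step is the dimension of $\GR$. I will justify it through the tangent space computation at an isotropic subspace, together with smoothness and transitivity of the $\operatorname{SO}(V)$-action, so that every point has the same tangent dimension $\frac{g(g+1)}{2}$.
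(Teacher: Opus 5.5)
Your proposal is correct, and it departs from the paper on two of the four numbers.

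For $X$ and $K$ you argue as the paper does: $X$ is an affine space, and $K$ is a finite quotient of $J_C$.

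For $Z$ the routes differ. The paper uses the incidence variety $\{(M,l) : \operatorname{Im}M\subset l\}\subset X\times\mathbb{P}^{g-1}$, which is the total space of a line bundle over $\mathbb{P}^{g-1}$, and applies the fiber dimension theorem to both projections. You use instead the map $\mathbb{A}^g\to Z$, $v\mapsto vv^t$. It is finite, since the pullback of $X_{i,i}$ is $v_i^2$, and surjective on closed points, so irreducibility and $\DD Z=g$ follow at once. Your argument is more elementary. The paper's incidence variety, however, is the desingularization on which Weyman's geometric method behind Theorem 1.5 rests. It also extends to symmetric rank $\leq k$ loci, where the analogous parametrization $A\mapsto AA^t$ has positive-dimensional fibres.

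For $\GR$ the paper only cites Griffiths--Harris. Your count $\DD(\operatorname{SO}(V)/P)=g^2-\frac{g(g-1)}{2}$ is self-contained once $\GR\cong\operatorname{SO}(V)/P$ is granted.

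One caveat concerns the tangent-space check. On its own it only gives $\DD\GR\leq\frac{g(g+1)}{2}$: transitivity makes the tangent dimension constant but does not give smoothness. That input has to come from somewhere. It can come from $\GR\cong\operatorname{SO}(V)/P$. Alternatively, use the open chart of graphs $w\mapsto w+Aw+b(w)$ of maps $W\to W^\vee\oplus L$. There isotropy determines the symmetric part of $A$ from $b$ and leaves $(A^{\mathrm{skew}},b)\in\wedge^2W^\vee\oplus(W^\vee\otimes L)$ free. This identifies the chart with an affine space of dimension $\frac{g(g+1)}{2}$, giving smoothness and the dimension together.
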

\begin{proof}
    The Jacobian $J_C$ and the Kummer variety $K$ are integral schemes of finite type over $\CC$. As the map $J\rightarrow K$ is finite, the dimension of $K$ is $g$ thanks to a consequence of the Going-Up theorem.
    The scheme $X$ is $\mathbb{A}_\CC^{\frac{g(g+1)}{2}}$, his dimension is then $\frac{g(g+1)}{2}$.
   
    The two other dimensions will be computed with an incidence argument.
    For the dimension of the symmetric determinantal variety, we have the following diagram for every $\CC$-scheme $T$.
    \begin{align*}
        Z(T)\leftarrow \{(M,l)\in X(T)\times\mathbb{P}^{g-1}(T)/\operatorname{Im}M\subset l\}\rightarrow\mathbb{P}^{g-1}(T)
    \end{align*}
    where the two maps are surjective morphisms between integral schemes over $\CC$. The fiber over a non-zero symmetric matrix is of dimension $0$, and the fiber over a line of $\mathbb{P}^{g-1}$ is of dimension $1$. Using the fiber dimension theorem, we obtain that the dimension of $Z$ is $g$.
    For the dimension of the orthogonal Grassmannian $\operatorname{OGr}(g,V)$, it is explained in \cite[Chapter 6, Section 1]{GH78}.
    
\end{proof}

Applying the Eagon Northcott perfection theorem, we obtain a minimal free resolution of $\II'_{|U}$. 

The choice of a different isomorphism $\phi'_U:\FF_{|U}\to\OO_U^g$ gives rise to a morphism $g_U:U\to\operatorname{GL}_g$ such that $f_U'=a\circ(g_U,f_U)$. We are therefore in the situation of the lemma \ref{equiv}, and we can glue the minimal free resolution of the different $\II'_U$ to obtain a minimal free resolution of $\II'$.

\begin{lemm}\label{resfofI'}
    For $i\geq1$, the $i$-term $G_i$ of the minimal free resolution of $\II'$ is  
    \begin{align*}
    \bigoplus_{u\geq 0}\bigoplus_{\alpha\in I_{i,u}}K_{\lambda(\alpha,u)}\FF.
    \end{align*}
\end{lemm}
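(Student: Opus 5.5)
The plan is to build the resolution locally, by pulling back Theorem 1.5 along the local maps $f_U$, and then to glue the local resolutions using Lemma \ref{equiv}.

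\textbf{Local step.} Fix an affine open $U\subset\GR$ trivializing $\FF$ via $\phi_U$, with the associated $f_U:U\to X$ making the square with $Z$ cartesian. Then $U\cap i'(K)=f_U^{-1}(Z)$, so $\II'_{|U}=\JJ\cdot\OO_U$. Set $A=\Gamma(U,\OO_U)$, viewed as a Noetherian algebra over $\CC[X_{i,j}]$ through $f_U$. The coordinate ring of $Z$ is Cohen--Macaulay by \cite[Theorem 6.3.1]{Wey03}, so to apply the Eagon--Northcott perfection theorem we only need the codimension equality
\begin{align*}
\DD A-\DD A/\JJ A=\tfrac{g(g+1)}{2}-\DD Z .
\end{align*}
By the previous lemma, $\DD A=\DD\GR=\DD X$ when $U$ is nonempty (both are integral). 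Also $\DD A/\JJ A=\DD(U\cap i'(K))=g=\DD Z$ when $U$ meets $K$; otherwise both sides are trivial, since the ideal is the unit ideal. The theorem then says that $f_U^*G_\bullet$ resolves $\II'_{|U}$. Minimality is preserved as well, because the differentials of $G_\bullet$ have entries in the maximal homogeneous ideal and these pull back into the ideals of the points of $U\cap i'(K)$; away from $K$ we simply ignore the question. Finally, the trivialization identifies $f_U^*\widetilde{L_{\lambda}E}$ with $L_\lambda(\OO_U^g)\cong L_\lambda\FF_{|U}$.

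\textbf{Gluing.} A second trivialization $\phi'_U$ differs from $\phi_U$ by $g_U:U\to\operatorname{GL}_g$, and then $f'_U=a\circ(g_U,f_U)$. On overlaps $U_i\cap U_j$ the transition functions $g_{ij}$ are the transition matrices of $\FF$, and they satisfy the cocycle identity. The terms $G_k$ and their differentials are $\operatorname{GL}_g$-equivariant by Theorem 1.5. Lemma \ref{equiv} and the remark after it therefore glue the $f_{U_i}^*G_k$ and the pulled-back differentials into a complex of locally free sheaves on $\GR$. This complex is exact and minimal, since both properties are local, and it resolves $\II'$. The glued ideal maps agree with the inclusion $\II'\subset\OO$ because the degree-zero term is $\OO$ with its trivial equivariant structure.

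\textbf{Identifying the terms.} The sheaf obtained by gluing $L_\lambda(\OO^g)$ with the transition maps $L_\lambda(g_{ij})$ is $L_\lambda$ applied to the sheaf glued from $\OO^g$ with the $g_{ij}$, that is $L_\lambda\FF$; this is functoriality of Schur functors and their compatibility with base change. The remaining task is to pass from $L_{\lambda(\alpha,u)}$ to $K_{\lambda(\alpha,u)}$. I expect this to be the main obstacle, since it hinges on conventions. I would first check whether $\lambda(\alpha,u)$ is self-transpose; if not, I would use the convention that $L_\lambda\cong K_{\lambda'}$ together with a symmetry of the index set $I_{i,u}$ under $\alpha\mapsto\alpha'$, which preserves $|\alpha|$. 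A further point is that the congruence action $M\mapsto gMg^t$ may mean that $\FF$ appears through its dual or its transpose, depending on how $\psi$ defines $f_U$. The aim is to match the displayed $K_{\lambda(\alpha,u)}\FF$ after this bookkeeping of dual and transpose conventions. I would check the match on the first term, which consists of quadrics, i.e.\ the $2\times2$ minors: there the generators form $\wedge^2\otimes\wedge^2$ of the right sheaf.
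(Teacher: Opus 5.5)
Your route is the paper's. You pull back the resolution of Theorem 1.5 along $f_U$ using the perfection theorem and the dimension lemma, glue with Lemma \ref{equiv}, and recognize the glued terms through transition maps $L_\lambda(g_{ij})$. The paper's written proof is exactly the check that the pulled-back equivariant structure $g_U^*\phi_{L_\lambda E}$ equals $L_\lambda(g_U)$, done at closed points of $U$. You take this directly from functoriality and base change, which is fine.

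Two points need adjusting.

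First, the step you single out as the main obstacle is settled by your first check. $\lambda(\alpha,u)$ is a $2u\times 2u$ square with arm $\alpha$ to its right and leg $\alpha'$ below it, where $\alpha'$ is not truncated, as the table entry $(5,2,1,1,1)$ shows. So $\lambda(\alpha,u)$ is self-conjugate, and $L_{\lambda(\alpha,u)}\FF\cong K_{\lambda(\alpha,u)}\FF$ by the remark in Section 1.2. This is what the paper uses tacitly when it passes from $L$ in Theorem 1.5 to $K$ in the statement.

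Your fallback through $\alpha\mapsto\alpha'$ would be the wrong mechanism in any case. The conjugate of $\lambda(\alpha,u)$ is $\lambda(\alpha,u)$ itself, not $\lambda(\alpha',u)$: for instance $\alpha=(2,0)$ gives $(4,2,1,1)$, while $\alpha'=(1,1)$ gives $(3,3,2)$.

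Your proposed first-term test does settle the $\FF$-versus-dual question, which the paper also leaves implicit. The $2\times 2$ minors of $\psi$ give a map $K_{(2,2)}\FF\to\OO$. For $g=2$ this is $(\det\FF)^{\otimes 2}=\OO_{\GR}(-4)\to\OO_{\GR}$, the Kummer quartic. So it is $\FF$, not $\FF^\vee$.

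Second, your justification of minimality is wrong. The entries of the differentials lie in the ideal of the origin of $X$. After pullback, that only places them in $\mathfrak m_x$ at points where $\psi_x=0$, not along all of $K$.

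At a point $x\in K$ where $\psi_x$ has rank one, $Z$ is smooth at $f_U(x)$. Hence $\II'_x$ is generated by a regular sequence of length $g(g-1)/2$. For $g\geq 3$ this is strictly less than $\operatorname{rank}G_1=g^2(g^2-1)/12$, so the localized complex is not minimal there.

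This costs nothing for the paper's purposes:
\begin{itemize}
\item the perfection theorem only provides acyclicity;
\item the later hypercohomology argument uses only exactness and the identification of the terms.
\end{itemize}
The word ``minimal'' in the statement has to be read in this loose sense, as the paper itself does.
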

\begin{proof}
We need to investigate the gluing data coming from the lemma \ref{equiv}.
The isomorphism of the $\operatorname{GL}_g$-equivariant structure of $\widetilde{L_\lambda E}$ is the pullback by the structure map $s:X\to\Spec(\CC)$ of the isomorphism of the $\operatorname{GL}_g$-equivariant structure of $L_\lambda E$. 
Let $\phi_{L_\lambda E}$ denote the isomorphism defining the $\operatorname{GL}_g$-equivariant structure of $L_\lambda E$. The isomorphism between $f_U'^*(\widetilde{L_\lambda E})$ and $f_U^*(\widetilde{L_\lambda E})$ is $g_U^*\phi_{L_\lambda E}$.

We will show that $g_U^*\phi_{L_\lambda E}=L_\lambda((g_U^\#X_{ij})_{1\leq i,j\leq g})$, it suffices to check the equality at the closed point of $U$. Let $x:\Spec(\CC)\to U$ be a closed point, the morphism $x^*g_U^*\phi_{L_\lambda E}$ is the morphism $L_\lambda(x^\#g_U^\#X_{ij})_{1\leq i,j\leq g}:L_\lambda E\to L_\lambda E$ by definition of the action of $\operatorname{GL}_g(\CC)$ on $L_\lambda E$. And the morphism $x^*L_\lambda((g_U^\#X_{ij})_{1\leq i,j\leq g})$ is also equal to $L_\lambda(x^\#g_U^\#X_{ij})_{1\leq i,j\leq g}:L_\lambda E\to L_\lambda E$ thanks to the compatibility of Schur functors with base change.
\end{proof}

However, the differentials of this complex are not $\operatorname{SO}(V)$-equivariant. The method from representation theory will not be useful to the computation of the differentials.
Thanks to the lemma \ref{resfofI'}, the Borel-Weil-Bott theorem, and  the fact that $\det\FF$ is $\OO_{\GR}(-2)$, we can now use the spectral sequence associated with hypercohomology to obtain results on the cohomology of the $\II'(2n)$ for $n\in\mathbb{Z}$. We will only consider the cases $0\leq n\leq2$.

\subsection*{Examples}
Here are the partitions arising for small values of $i$.
\[
\begin{array}{c@{\qquad}c@{\qquad}c@{\qquad}c}
i=1 & i=2 & i=3 & i=4 \\[2mm]
\Y{2,2} & \Y{3,2,1} &
\begin{array}{c@{\hspace{3mm}}c}
\Y{4,2,1,1} & \Y{3,3,2}
\end{array} &
\begin{array}{c@{\hspace{3mm}}c}
\Y{5,2,1,1,1} & \Y{4,3,2,1}
\end{array}
\end{array}
\]

For genus $g$ equal to $1$, all the terms of the minimal free resolution vanish; this means that the morphism $K\rightarrow\mathbb{P}$ is an isomorphism.

For genus $g$ equal to $2$, the ideal sheaf is isomorphic to $K_{(2,2)}\FF$, then $\II'(2)$ is isomorphic to $K_{(1,1)}\FF$, which has no cohomology (example $(2)$ of section $1.3$), and $\II'(4)$ is isomorphic to $K_{(0,0)}\FF$, which has cohomology only in degree 0 (example $(3)$ of section $1.3$). The dimension of $\operatorname{H}^0(\operatorname{OGr}(2,V),\II'(4))$ is one. 

The morphism $\operatorname{OGr}(2,V)\rightarrow\mathbb{P}^3$ is an isomorphism (by dimension the image of the orthogonal Grassmannian inside $\mathbb{P}^3$ is $\mathbb{P}^3$ and the scheme $\mathbb{P}^3$ is reduced). The Kummer variety viewed in the projective space $\mathbb{P}^3$ is defined as a quartic surface; this is the so-called Kummer surface.

For genus $g$ equal to $3$, the minimal free resolution of $\II'$ is
\begin{align*}
    0\rightarrow K_{(3,3,2)}\FF\rightarrow K_{(3,2,1)}\FF\rightarrow K_{(2,2,0)}\FF\rightarrow\II'\rightarrow0
\end{align*} and the minimal free resolution of $\II'(4)$ is
\begin{align*}
    0\rightarrow K_{(1,1,0)}\FF\rightarrow K_{(1,0,-1)}\FF\rightarrow K_{(0,0,-2)}\FF\rightarrow\II'(4)\rightarrow0.
\end{align*}
The locally free sheaf $K_{(1,1,0)}\FF$ has no cohomology (example $(9)$ of section $1.3$), so we obtain the following short exact sequence 
\begin{align*}
    0\rightarrow\operatorname{H}^0(\operatorname{OGr}(g,V),K_{(0,0,-2)}\FF)\rightarrow\operatorname{H}^0(\operatorname{OGr}(g,V),\II'(4))\rightarrow\operatorname{H}^1(\operatorname{OGr}(g,V),K_{(1,0,-1)}\FF)\rightarrow0
\end{align*}
So using the Weyl dimension formula \cite[Section 24.3]{Hum72}, we obtain that the dimension of $\operatorname{H}^0(\operatorname{OGr}(g,V),K_{(0,0,-2)}\FF)$ is $27$ and the dimension of $\operatorname{H}^1(\operatorname{OGr}(g,V),K_{(-1,0,-1)}\FF)$ is $7$. The dimension of $\operatorname{H}^0(\operatorname{OGr}(g,V),\II'(4))$ is then $34$, which is in accordance with the formula and Stoll's result \cite[Theorem 2.5]{Sto17}.

\section{Combinatorics}\label{Combinatorics}
The final goal of this section is to show that for $p\geq1$, the cohomology group $\HH^p(\GR,\II'(4))$ vanishes. The proof of the next lemma will serve as a toy example for the more involved proof of lemma \ref{comb2}. Let $\alpha=(\alpha_1,\dots,\alpha_g)$ be a nice vector and $u\geq0$.

\begin{lemm}\label{comb1}
    The length of the required $\sigma\in\W$ in the Borel-Weil-Bott procedure applied to $K_{\lambda(\alpha,u)}\FF$ is $|\alpha|+2u^2+u$.
\end{lemm}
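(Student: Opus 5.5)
The plan is to run the Borel-Weil-Bott procedure of Theorem 1.4 directly on $\beta=\lambda(\alpha,u)$ and count the length of the sorting element $\sigma$. First I reduce to the case $g=4u$. We need $\alpha$ to have at most $2u$ nonzero parts and $4u\le g$, since otherwise $K_{\lambda(\alpha,u)}\FF=0$. The vector $\lambda(\alpha,u)$ padded with zeros then gives the same length, by the remark following the examples of Section 1.3. So I set $g=4u$ and
\begin{align*}
\beta=(2u+\alpha_1,\dots,2u+\alpha_{2u},\alpha'_1,\dots,\alpha'_{2u}),
\end{align*}
where $\alpha'$ has at most $2u$ nonzero entries because $\alpha$ has at most $2u$ parts.

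Next I compute $\gamma+\rho$. Writing $j=g+1-k$, the $k$-th entry of $\gamma+\rho$ is $e_j=j-\tfrac12-\beta_j$. For $j\le 2u$ this is $e_j=-(2u+\alpha_j-j+\tfrac12)$, which is negative. For $j=2u+m$ with $1\le m\le 2u$ it is $e_j=2u-1+m-\alpha'_m+\tfrac12$, which is at least $m-\tfrac12>0$ because $\alpha'_m\le 2u$. Hence exactly $2u$ entries are negative, and the sign part of $\sigma$ contributes $2u$ to the length.

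For the permutation part I use the classical complementarity lemma for a partition inside a $2u\times 2u$ box. The sets $\{2u+\alpha_j-j\}_{1\le j\le 2u}$ and $\{2u-1+m-\alpha'_m\}_{1\le m\le 2u}$ are disjoint and their union is $\{0,\dots,4u-1\}$. This is the standard boundary-path or Frobenius argument, and I would prove or cite it. It shows that the absolute values $|e_j|$ are distinct, so we are in case (2) of Theorem 1.4. It remains to count the inversions, relative to decreasing order, of the sequence of absolute values read in the order $k=1,\dots,g$.

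In the order $k=1,\dots,g$ we first read the second block, with $m$ going from $2u$ down to $1$. These values are decreasing, so they contribute no inversions among themselves. We then read the first block, with $j$ going from $2u$ down to $1$. These absolute values are increasing, so all $\binom{2u}{2}=u(2u-1)$ pairs within this block are inversions. A cross pair $(m,j)$ is an inversion exactly when $2u+\alpha_j-j>2u-1+m-\alpha'_m$, that is, when the hook quantity $\alpha_j+\alpha'_m-j-m+1$ is positive. This happens precisely when the box $(j,m)$ lies in the Young diagram of $\alpha$, which gives $|\alpha|$ inversions. Adding up, the length is $2u+u(2u-1)+|\alpha|=|\alpha|+2u^2+u$.

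The main obstacle is the cross-pair count, which amounts to the identification "hook length positive iff the box lies in the diagram". I would handle it by the standard equivalence: $\alpha_j\ge m$ if and only if $\alpha'_m\ge j$. If $(j,m)$ is in the diagram, both inequalities hold and the difference is at least $1$. If it is not, then $\alpha_j<m$ and $\alpha'_m<j$, and the difference is at most $-1$. I would also check the sign and ordering conventions against the examples $(2,2)$, $(3,2,1)$ and $(3,3,2)$ in Section 1.3, which should give lengths $3$, $4$ and $5$.
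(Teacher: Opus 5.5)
Your inversion count is correct, but the reduction to $g=4u$ rests on a false step. That $\alpha$ has at most $2u$ parts gives $\alpha'_m\le 2u$ for every $m$. It does not give that $\alpha'$ has at most $2u$ nonzero entries: $\alpha'$ has exactly $\alpha_1$ nonzero entries, and $\alpha_1$ is unrestricted. The paper's table lists $(5,2,1,1,1)=\lambda((3,0),1)$ for $i=4$, and in the paper's proof the tail of $\lambda(\alpha,u)$ has length $\sum_i\delta_i=\alpha_1$.

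Likewise $4u\le g$ is not forced. The vector $\lambda(\alpha,u)$ has length $2u+\alpha_1$, so $K_{\lambda(\alpha,u)}\FF\neq0$ exactly when $2u+\alpha_1\le g$; the paper uses $K_{(2,2)}\FF$ for $g=2$. As written, you therefore only treat $\alpha$ inside the $2u\times 2u$ box. For $\alpha_1>2u$ the vector does not fit in $\mathbb{Z}^{4u}$, and the box lemma you invoke does not apply.

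The repair is easy. Skip the reduction and put $N=g-2u$, which is at least $\alpha_1$ whenever $K_{\lambda(\alpha,u)}\FF\neq0$. Then use the complementarity lemma for the $2u\times N$ rectangle: $\{2u+\alpha_j-j\}_{1\le j\le 2u}\sqcup\{2u-1+m-\alpha'_m\}_{1\le m\le N}=\{0,\dots,2u+N-1\}$. The rest goes through unchanged:
\begin{itemize}
\item positivity of the second block still uses only $\alpha'_m\le 2u$;
\item that block is still decreasing in reading order;
\item the first block still contributes $\binom{2u}{2}$;
\item your hook criterion still gives exactly $|\alpha|$ cross inversions.
\end{itemize}
So the total is again $|\alpha|+2u^2+u$, and you no longer need the padding remark.

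With this fix your argument is a genuinely different route from the paper's. The paper groups equal parts of $\alpha$ (multiplicities $\epsilon_i$, gaps $\delta_i$) and sorts block by block via a sketched induction on $r$. It then simplifies $\sum_i\frac{\epsilon_i(\epsilon_i+1)}{2}+\sum_i\epsilon_i\bigl(2u+\sum_{j\ge i}\delta_j-\sum_{j\le i}\epsilon_j\bigr)$ to $|\alpha|+2u^2+u$. In your version, the complementarity lemma gives case (2) of Borel--Weil--Bott at once. The hook criterion identifies the cross inversions with the boxes of $\alpha$, so no block decomposition or algebraic simplification is needed.
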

\begin{proof}
    We denote by $x_1>\dots>x_r$ the different values taken by $\alpha'=(\alpha_1,\dots,\alpha_g,\dots,0)\in\mathbb{N}^{2u}$.
    
    For $1\leq i\leq r$, we will denote by $\epsilon_i=\operatorname{card}\{j|\alpha'_j=x_i\}$ the number of $\alpha'_i$ equal to the $i$-th largest value taken by the components of $\alpha'$.
    
    For $1\leq i\leq r-1$, we will also denote by $\delta_i=x_i-x_{i+1}$ the difference between the $i$-th largest value taken by the components of $\alpha'$ and the $i+1$-th largest value taken by the components of $\alpha'$. We denote by $\delta_r=x_r$.
    
    We see that the sum of all the epsilon is equal to $2u$ and that the sum of all the delta is equal to $\alpha_1$.

    By definition of the epsilons and the deltas, we have that the partition $\lambda(\alpha,u)$ is of the following form 
\begin{align*}(\overbrace{2u+\sum_{i=1}^r\delta_i,\dots,2u+\sum_{i=1}^r\delta_i}^{\epsilon_1\text{ times}},2u+\sum_{i=2}^r\delta_i,\dots,\epsilon_1+\epsilon_2,\overbrace{\epsilon_1,\dots,\epsilon_1}^{\delta_1\text{ times}})
\end{align*}

We now apply the Borel-Weil-Bott procedure.

The first step is to consider the following vector 
\begin{align*}
    (-\epsilon_1,\dots,-\epsilon_1,-\epsilon_1-\epsilon_2,\dots,-2u-\sum_{i=2}^r\delta_i,-2u-\sum_{i=1}^r\delta_i,\dots,-2u-\sum_{i=1}^r\delta_i)
\end{align*} 
and add the $\rho$-vector 
\begin{align*}
    (2u+\sum_{i=1}^r\delta_i-\frac{1}{2},...,2u+\sum_{i=2}^r\delta_i+\frac{1}{2},2u+\sum_{i=2}^r\delta_i-\frac{1}{2},...,\epsilon_1+\frac{1}{2},\epsilon_1-\frac{1}{2},...,\frac{1}{2}).
\end{align*}

We obtain the following vector 
\begin{align*}
    (\overbrace{2u+\sum_{i=1}^r\delta_i-\epsilon_1-\frac{1}{2},\dots,2u+\sum_{i=2}^r\delta_i+\frac{1}{2}-\epsilon_1}^{\delta_1\text{ times}},2u+\sum_{i=2}^r\delta_i-\frac{1}{2}-\epsilon_1-\epsilon_2,
\end{align*}    
    
\begin{align*}
    \dots,-2u-\sum_{i=2}^r\delta_i+\epsilon_1+\frac{1}{2},\overbrace{-2u-\sum_{i=1}^r\delta_i+\epsilon_1-\frac{1}{2},\dots,-2u-\sum_{i=1}^r\delta_i+\frac{1}{2}}^{\epsilon_1\text{ times}})
\end{align*}

The last $\epsilon_1$ terms are negative, so there are $\epsilon_1$ changes of signs to do. As $(2u+\sum_{i=1}^r\delta_i-\epsilon_1+\frac{1}{2})$ is strictly greater than $(2u+\sum_{i=1}^r\delta_i-\epsilon_1-\frac{1}{2})$, the last $\epsilon_1$ terms, when considered as positive, are all strictly greater than the first term. We also have that $(2u+\sum_{i=2}^r\delta_i-\epsilon_1-\frac{1}{2})$ is strictly less than $(2u+\sum_{i=2}^r\delta_i-\epsilon_1+\frac{1}{2})$ then the last $\epsilon_1+1$-th term, when considered as positive, is strictly less than the $\delta_1$-th term.

We then need to put the last $\epsilon_1$ terms of the vector in front of the first component, so we need to perform $(2u+\sum_{i=1}^r\delta_i-\epsilon_1)\times\epsilon_1$ transpositions, and this is the minimal number of transpositions required. Subsequently, we need to rearrange the first $\epsilon_1$ terms (which are increasing) to put them such that the first $\epsilon_1$ terms are decreasing, so there is $\frac{\epsilon_1(\epsilon_1-1)}{2}$ transpositions to do. In total, we have done $\frac{\epsilon_1\times(\epsilon_1+1)}{2}+\epsilon_1\times(2u+\sum_{i=1}^r\delta_i-\epsilon_1)$ operations contributing to the length at this step.

The next step uses $\frac{\epsilon_2\times(\epsilon_2+1)}{2}+\epsilon_2\times(2u+\sum_{i=2}^r\delta_i-\epsilon_1-\epsilon_2)$ operations, and the transposition used in this step will not interfere with the transpositions of the first step. So, we are still using the minimal number of transpositions required. Arguing by induction on $r$, we conclude that we are in case $(2)$ of the Borel-Weil-Bott theorem and that the length of the required $\sigma\in\W$ is $\sum_{i=1}^{r}\frac{\epsilon_i\times(\epsilon_i+1)}{2}+\sum_{i=1}^{r}\epsilon_i\times(2u+\sum_{j=i}^{r}\delta_j-\sum_{j=1}^i\epsilon_j)$.

In our notation, the sum of the components of $\alpha$ is equal to $\sum_{i=1}^{r}\epsilon_i\left(\sum_{j=i}^{r} \delta_j\right)$, the integer $2u$ is equal to $\sum_{i=1}^{r} \epsilon_i$.

We therefore have 
\begin{align*}
    l(\sigma)=\sum_{i=1}^{r}\frac{\epsilon_i(\epsilon_i+1)}{2}+\sum_{i=1}^{r}\epsilon_i\left(2u+\sum_{j=i}^{r}\delta_j-\sum_{j=1}^{i}\epsilon_j\right)
\end{align*}
\begin{align*}
    =\frac{\displaystyle\sum_{i=1}^{r}\epsilon_i^2}{2}+\frac{\displaystyle\sum_{i=1}^{r}\epsilon_i}{2}+|\alpha|+\sum_{i=1}^{r}\epsilon_i\left(\sum_{j=i+1}^{r}\epsilon_j\right)
\end{align*}
\begin{align*}
    =|\alpha|+u+\frac{\left(\displaystyle\sum_{i=1}^{r}\epsilon_i\right)^2}{2}
    =|\alpha|+u+2u^2.
\end{align*}
\end{proof}

The examples from the Borel-Weil-Bott section $1.3$ are in accordance with the lemma \ref{comb1}.

Knowing where the cohomology of the minimal free resolution of $\II'$ is concentrated, using the spectral sequence associated with hypercohomology, we can collect some information on the higher cohomology of $\II'$.
The next technical lemma gives not an explicit description of the length of the required element of $\W$ compared to lemma \ref{comb1}; however, the vanishing of the cohomology of the minimal free resolution of $\II'(4)$ is better located.

\begin{lemm}\label{comb2}
    The length of the required $\sigma\in\W$ in the Borel-Weil-Bott procedure applied to $K_{\lambda(\alpha,u)}\mathcal{F}(4)$ is strictly less than $|\alpha|+2u^2-u$.
\end{lemm}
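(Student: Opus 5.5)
The plan is to compare the Borel-Weil-Bott computation for $K_{\lambda(\alpha,u)}\FF(4)$ with the one already carried out in Lemma \ref{comb1} for $K_{\lambda(\alpha,u)}\FF$. Note first that $u\geq1$, since $I_{i,u}$ with $i\geq1$ is empty for $u=0$.

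\emph{Reduction to a shifted vector.} Since $\Sigma^*\OO(2)=(\det\FF)^\vee$, we have $\OO_{\GR}(4)=(\det\FF)^{\otimes -2}$. By the Schur calculus rule recalled in Section \ref{Technical tools},
\begin{align*}
K_{\lambda(\alpha,u)}\FF(4)\cong K_{\lambda(\alpha,u)-(2,\dots,2)}\FF ,
\end{align*}
where $\lambda(\alpha,u)$ is padded with zeros to length $g$. Let $w=\gamma+\rho$ be the vector obtained after the second step of the procedure for $\lambda(\alpha,u)$; this vector was written out explicitly in the proof of Lemma \ref{comb1}. The corresponding vector for the twisted bundle is $w+c$ with $c=(2,\dots,2)$. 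If two components of $w+c$ agree up to sign, we are in case $(1)$ of Theorem 1.4. Then there is no cohomology at all, and I would treat the statement as holding in that case (this is all that the later hypercohomology argument needs). So assume from now on that $w+c$ is regular.

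\emph{Length as a root count.} I would reformulate the length used in the paper (sign changes plus adjacent transpositions) as the number of positive roots $\theta\in\{e_i,\ e_i-e_j,\ e_i+e_j\ (i<j)\}$ with $\langle v,\theta\rangle<0$. This is the number of inversions of the unique $\sigma$ making $v$ dominant, and it reproduces the counts in Lemma \ref{comb1}. Passing from $w$ to $w+c$ changes the pairings as follows:
\begin{align*}
\langle w+c,e_i\rangle &= w_i+2,\\
\langle w+c,e_i-e_j\rangle &= w_i-w_j,\\
\langle w+c,e_i+e_j\rangle &= w_i+w_j+4 .
\end{align*}
So every pairing weakly increases, and the set of negative roots for $w+c$ is contained in that for $w$. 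By Lemma \ref{comb1},
\begin{align*}
l(\sigma_{w+c}) = |\alpha|+2u^2+u-N ,
\end{align*}
where $N$ is the number of \emph{flipping roots}: roots $e_i$ with $w_i\in\{-\tfrac12,-\tfrac32\}$, and roots $e_i+e_j$ ($i<j$) with $w_i+w_j\in\{-1,-2,-3\}$. The statement is therefore equivalent to the inequality $N\geq 2u+1$.

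\emph{Counting flipping roots.} I would use the explicit block form of $w$ from Lemma \ref{comb1}, organised by the $\epsilon_k$ and $\delta_k$. The components of $w$ form a run of consecutive half-integers inside each block. The negative components come in blocks of sizes $\epsilon_1,\dots,\epsilon_r$ (in total $2u$ of them). Each negative block sits just after a positive run whose values, in absolute value, interlace with it; this is precisely the interlacing used in Lemma \ref{comb1} to place the sign-changed terms.

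For each negative component $w_j$, I would pair it with the earlier positive component $w_i$ whose absolute value is the nearest one below $|w_j|$. The inequalities exhibited in the proof of Lemma \ref{comb1} give $w_i+w_j=-1$ in most cases, and at worst $w_i+w_j\in\{-2,-3\}$ at the boundary of a block. This produces $2u$ distinct flipping roots of type $e_i+e_j$. One more flipping root is then needed. I would obtain it from the negative component of smallest absolute value: either it equals $-\tfrac12$ or $-\tfrac32$, which yields a flipping root $e_j$; or it pairs with a second positive neighbour (the one just above it in absolute value) with sum in $[-3,0)$. Careful bookkeeping is needed to keep these two alternatives from overlapping.

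\emph{Main obstacle.} The hard step is this last count. It has to be uniform in the shape of $\alpha$, including the boundary cases $\delta_k=1$, $\epsilon_k=1$ and the zero padding when $g>4u$. It also has to make sure that the $2u+1$ roots really are distinct. If the nearest-neighbour pairing turns out to be awkward, I would instead induct on $r$, as in Lemma \ref{comb1}. Each block $\epsilon_k$ should contribute at least $\epsilon_k$ flipping roots, and the last block at least one more. The spare root would come from the final entries $\epsilon_1-\tfrac12,\dots,\tfrac12$ of $\rho$, which are where the small absolute values occur.
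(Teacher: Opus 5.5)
Your reduction is correct, and it is a cleaner framework than the paper's transposition counting. You count the length as the number of positive roots of $B_g$ pairing negatively with the vector, note that the shift by $c=(2,\dots,2)$ can only remove roots from that set, and use Lemma \ref{comb1} to turn the statement into $N\geq 2u+1$. But that inequality is the whole content of the lemma, and you do not prove it. The bookkeeping you sketch is also wrong as stated, in two ways.

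\emph{The pairing does not give $2u$ roots.} The nearest-smaller-positive pairing does not produce $2u$ roots of type $e_i+e_j$. Whenever $\alpha_{2u}=0$, the vector $w$ has the entry $-\frac{1}{2}$, which has no positive partner of smaller absolute value. For $\lambda(\alpha,u)=(2,2)$ and $g=2$ one even has $w=(-\frac{1}{2},-\frac{3}{2})$ with no positive entry at all; the three flipping roots are $e_1,e_2,e_1+e_2$.

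\emph{Regularity is never used.} More seriously, regularity of $w+c$ must enter the count, and your sketch never uses it. The block structure from Lemma \ref{comb1} alone does not give $N\geq 2u+1$. Take $\lambda(\alpha,u)=(4,4,4,4)$, that is $u=2$, $\alpha=0$. The negative entries of $w$ are $-\frac{1}{2},-\frac{3}{2},-\frac{5}{2},-\frac{7}{2}$, and every positive entry is at least $\frac{9}{2}$. So $N=4<5=2u+1$. This case is excluded only because $w+c$ is singular ($-\frac{1}{2}-\frac{7}{2}=-4$). The paper, for its part, invokes the vanishing case at each stage of its case analysis, to force $\epsilon_1+\delta_1\leq 4$, $\epsilon_r\leq 2$, and so on.

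\emph{How to close the gap.} What is missing is a translation of regularity into a usable constraint. The partition $\lambda(\alpha,u)$ is self-conjugate, with Durfee square of size $2u$ and Frobenius arm lengths $a_k=\alpha_k+2u-k$. Hence the negative entries of $w$ are the $-(a_k+\frac{1}{2})$, and the positive entries are the remaining positive half-integers. All of those below $a_1+\frac{1}{2}$ occur, since $g\geq\lambda_1$.

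Then $w+c$ is regular if and only if $A=\{a_1,\dots,a_{2u}\}$ satisfies two conditions:
\begin{enumerate}
\item $a-4\in A$ for every $a\in A$ with $a\geq 4$;
\item $A$ contains no two elements with sum $3$.
\end{enumerate}
In particular $A$ meets at most two residue classes mod $4$. With this:
\begin{enumerate}
\item every $a\geq 2$ in $A$ has a positive partner at distance at most $3$;
\item each $a\in\{0,1\}$ gives a short flipping root $e_j$;
\item a short case check on the two smallest elements of $A$ produces the extra root.
\end{enumerate}
Equivalently, one can compute $N-2u=\sum_{a\in A}h(a)-\#\{\{a,a'\}\subset A:\ a+a'\geq 4,\ |a-a'|\leq 4\}$, where $h(0)=0$, $h(1)=h(2)=1$, $h(3)=2$ and $h(a)=3$ for $a\geq 4$. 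One then checks that this quantity is at least $1$ under the two constraints above.
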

\begin{proof}
    The partition $\lambda(\alpha,u)$ has the following form
    \begin{align*}              (\overbrace{2u+\sum_{i=1}^r\delta_i,\dots,2u+\sum_{i=1}^r\delta_i}^{\epsilon_1\text{ times}},2u+\sum_{i=2}^r\delta_i,\dots,\epsilon_1+\epsilon_2,\overbrace{\epsilon_1,\dots,\epsilon_1}^{\delta_1\text{ times}})
    \end{align*}
    The twist by $\OO_{\GR}(2)$ gives the following vector
    \begin{align*}
    (2u-2+\sum_{i=1}^r\delta_i,...,2u-2+\sum_{i=1}^r\delta_i,2u-2+\sum_{i=2}^r\delta_i,...,-2+\epsilon_1+\epsilon_2,-2+\epsilon_1,...,-2+\epsilon_1)
    \end{align*}

    We now apply the Borel-Weil-Bott procedure.
    The first step is to consider the following vector
    \begin{align*}
            (-\epsilon_1+2,\dots,-\epsilon_1+2,-\epsilon_1-\epsilon_2+2,\dots,-2u-\sum_{i=2}^r\delta_i+2,-2u-\sum_{i=1}^r\delta_i+2,\dots,-2u-\sum_{i=1}^r\delta_i+2)
    \end{align*}
    and add the $\rho$-vector
    \begin{align*}
    (2u+\sum_{i=1}^r\delta_i-\frac{1}{2},...,2u+\sum_{i=2}^r\delta_i+\frac{1}{2},2u+\sum_{i=2}^r\delta_i-\frac{1}{2},...,\epsilon_1+\frac{1}{2},\epsilon_1-\frac{1}{2},...,\frac{1}{2}).
    \end{align*}
    
    We obtain the following vector 
    \begin{align*}
        (\overbrace{2u+\sum_{i=1}^r\delta_i-\epsilon_1+\frac{3}{2},\dots,2u+\sum_{i=2}^r\delta_i+\frac{5}{2}-\epsilon_1}^{\delta_1\text{ times}},2u+\sum_{i=2}^r\delta_i+\frac{3}{2}-\epsilon_1-\epsilon_2,
    \end{align*}
    \begin{align*}
        \dots,-2u-\sum_{i=2}^r\delta_i+\epsilon_1+\frac{5}{2},\overbrace{-2u-\sum_{i=1}^r\delta_i+\epsilon_1+\frac{3}{2},\dots,-2u-\sum_{i=1}^r\delta_i+\frac{5}{2}}^{\epsilon_1\text{ times}}).
    \end{align*}
    The first component is $(2u+\sum_{i=1}^r\delta_i-\epsilon_1+\frac{3}{2})$, and the $\epsilon_1$-th component from the end is (up to the sign) $(2u+\sum_{i=1}^r\delta_i-\epsilon_1-\frac{3}{2})$. If two terms were equal, the cohomology would vanish. Hence, we may assume that $\epsilon_1+\lambda_1\leq4$.

    We now have the following inequalities
    \begin{align*}
    (2u+\sum_{i=2}^r\delta_i+\frac{5}{2}-\epsilon_1)\geq(2u+\sum_{i=1}^r\delta_i-\epsilon_1-\frac{3}{2}) \text{ because } 4\geq\delta_1+\epsilon_1.
    \end{align*}
    \begin{align*}
        (2u+\sum_{i=2}^r\delta_i+\frac{5}{2}-\epsilon_1)\geq-(2u+\sum_{i=1}^r\delta_i-\epsilon_1-\frac{3}{2}) \text{ because } 2u\geq\epsilon_1 \text{ and } \frac{5}{2}\geq\frac{3}{2}
    \end{align*}
    \begin{align*}
    (2u+\sum_{i=2}^r\delta_i+\frac{5}{2}-\epsilon_1)\geq(2u+\sum_{i=1}^r\delta_i-\frac{5}{2}) \text{ because } 5\geq\delta_1+\epsilon_1.
    \end{align*}
    \begin{align*}
        (2u+\sum_{i=2}^r\delta_i+\frac{5}{2}-\epsilon_1)\geq-(2u+\sum_{i=1}^r\delta_i-\frac{5}{2})\text{ because } 2u\geq\epsilon_1
    \end{align*}
    The first two inequalities compare the $\delta_1$-th component from the beginning with the $\epsilon_1$-th component from the end.
    The last two inequalities compare the $\delta_1$-th component from the beginning with the first component from the end.

    Compared to lemma \ref{comb1}, we can eliminate $\epsilon_1\times\delta_1$ operations in this step because the first $\delta_1$ terms will stay at the beginning of the vector. The length of the required $\sigma$ in this twisted situation is bounded by the length of the required element of $\W$ in the non-twisted situation minus all the contributions described above. The length satisfies $l(\sigma)\leq|\alpha|+2u^2+u-\sum_{i=1}^r\epsilon_i\times\delta_i$, which does not suffice to obtain the required inequality.

    We need to analyze the last step further to determine whether some operations can be omitted.
    Before the last step, in the middle, the components are
    \begin{align*}
    (\dots,\overbrace{\delta_r+\frac{3}{2},\dots,\frac{5}{2}}^{\delta_r\text{ times}},\overbrace{\frac{3}{2}-\delta_r,\dots,-\delta_r-\epsilon_r+\frac{5}{2}}^{\epsilon_r\text{ times}},\dots)    
    \end{align*}

    We will examine each case individually.
    
    If $\delta_r\geq2$ (or $\delta_i\geq2$), then $l(\sigma)\leq |\alpha|+2u^2+u-\sum_{i=1}^r\epsilon_i-1$, so we have that $l(\sigma)<|\alpha|+2u^2-u$ as required.
    
    If $\delta_r=1$, as the components in the middle are
    \begin{align*}
    (\dots,\frac{5}{2},\overbrace{\frac{1}{2},\dots,-\epsilon_r+\frac{3}{2}}^{\epsilon_r\text{ times}},\dots)    
    \end{align*}
    we see that the first term in the "$\epsilon_r\text{ times}$" bracket is already positive. We have one less operation to perform. The length satisfies $l(\sigma)\leq|\alpha|+2u^2+u-\sum_{i=1}^r\epsilon_i\times\delta_i-1<|\alpha|+2u^2-u$.

    If $\delta_r=0$, the components in the middle are
    \begin{align*}
    (\dots,\overbrace{\frac{3}{2},\dots,-\epsilon_r+\frac{5}{2}}^{\epsilon_r\text{ times}},\dots)    
    \end{align*}
    
    If $\epsilon_r\geq3$, then two terms are equal up to the sign, and we have no cohomology.
    
    If $\epsilon_r=2$, then we have zero operation to perform on the "$\epsilon_r\text{ times}$" bracket, so the length $l(\sigma)$ satisfies $l(\sigma)\leq|\alpha|+2u^2+u-\sum_{i=1}^{r-1}\epsilon_i\times\delta_i-3<|\alpha|+2u^2-u$.

    If $\epsilon_r=1$, we can also assume that $\delta_{r-1}=1$ (otherwise we are in the first case), then before doing the penultimate step, the component at the right of $\frac{3}{2}$ is $-\frac{1}{2}$. So we don't have to transpose these two components, and we have that the component of the "$\epsilon_r\text{ times}$" bracket is already positive. We can omit two operations, which gives us the required inequality on the length.

\end{proof}

The examples from the Borel-Weil-Bott section $1.3$ are in accordance with the lemma \ref{comb2}. The following two lemmas will be used in the proof of the theorem on the formula for the number of quadrics vanishing on the Kummer variety.

\begin{lemm}
    For all $p\geq1$, the cohomology group $\HH^p(\PP,\II(4))$ vanishes
\end{lemm}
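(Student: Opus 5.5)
The plan is to first prove the vanishing of $\HH^p(\GR,\II'(4))$ for $p\geq1$, and then transfer it to $\PP$ through the factorization $i=\Sigma\circ i'$.

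\textbf{Step 1: vanishing on $\GR$.} Twist the minimal free resolution of Lemma \ref{resfofI'} by $\Sigma^*\OO(4)=(\det\FF)^{\vee\otimes2}$. This gives a resolution
\begin{align*}
\cdots\to G_2(4)\to G_1(4)\to\II'(4)\to0,
\end{align*}
where each $G_i(4)$ is a direct sum of the sheaves $K_{\lambda(\alpha,u)}\FF(4)$ with $\alpha\in I_{i,u}$, that is with $i=|\alpha|+2u^2-u$. Splitting the resolution into short exact sequences, or using the hypercohomology spectral sequence $E_1^{-j,q}=\HH^q(\GR,G_{j+1}(4))\Rightarrow\HH^{q-j}(\GR,\II'(4))$, we see that $\HH^p(\II'(4))$ is built from the groups $\HH^{p+i-1}(G_i(4))$. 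By Theorem 1.4, each summand of $G_i(4)$ has cohomology in at most one degree $l(\sigma)$. Lemma \ref{comb2} gives $l(\sigma)<|\alpha|+2u^2-u=i$, so $l(\sigma)\leq i-1$. Hence $\HH^{p+i-1}(G_i(4))=0$ as soon as $p\geq1$. All contributions to total degree $p\geq1$ therefore vanish, and $\HH^p(\GR,\II'(4))=0$ for $p\geq1$.

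\textbf{Step 2: vanishing of the higher cohomology of $\OO_{\GR}(4)$.} The case $\alpha=0$, $u=0$ in Theorem 1.4 gives $\HH^p(\GR,K_{(-2,\dots,-2)}\FF)=0$ for $p\geq1$. Indeed $\gamma=(2,\dots,2)$ is already dominant, so $\sigma=\mathrm{id}$ and $l(\sigma)=0$. Thus $\HH^p(\GR,\OO(4))=0$ for $p\geq1$, and the sequence $0\to\II'(4)\to\OO_{\GR}(4)\to i'_*\OO_K(4)\to0$ together with Step 1 shows that $\HH^p(K,M^{\otimes2})=0$ for $p\geq1$. This vanishing is also consistent with $M^{\otimes 2}$ being ample on an abelian quotient.

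\textbf{Step 3: passage to $\PP$.} Let $\II_{\GR}$ be the ideal of $\Sigma(\GR)$ in $\PP$. The sequence $0\to\II_{\GR}\to\II\to\Sigma_*\II'\to0$ twisted by $\OO(4)$ reduces the claim to $\HH^p(\PP,\II_{\GR}(4))=0$ for $p\geq1$. Since $\HH^p(\PP,\OO(4))=0$ for $p\geq1$, and $\HH^p(\GR,\OO(4))=0$ for $p\geq1$ by Step 2, this amounts to showing that
\begin{align*}
\HH^0(\PP,\OO(4))\to\HH^0(\GR,\OO_{\GR}(4))
\end{align*}
is surjective. This is projective normality of the spinor embedding. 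It follows from Borel--Weil: $\HH^0(\GR,\Sigma^*\OO(k))$ is the irreducible representation of highest weight $k\omega_{\mathrm{spin}}$, which is the Cartan component of $\operatorname{Sym}^k$ of the spin representation, and therefore a quotient of it.

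\textbf{Main obstacle.} Step 1 is essentially done by Lemma \ref{comb2}. The delicate point is Step 3. The spinor variety is not an arithmetically trivial subvariety, so one must check that the higher cohomology of $\II_{\GR}(4)$ vanishes. My first approach is to use the known facts that the spinor variety is arithmetically Cohen--Macaulay and projectively normal, which give $\HH^1(\PP,\II_{\GR}(k))=0$ for all $k$. I would then combine this with the vanishing of $\HH^p(\GR,\OO(k-?))$ for the intermediate degrees, both computed again by Theorem 1.4, to conclude that $\HH^p(\PP,\II_{\GR}(4))=0$ for all $p\geq1$.
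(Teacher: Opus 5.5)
Your proposal is correct. Step 1 is exactly the paper's argument: the hypercohomology spectral sequence of $G_\bullet(4)$, combined with Lemma \ref{comb2}, which places the cohomology of every summand of $G_i(4)$ in degree at most $i-1$. Your indexing $E_1^{-j,q}\Rightarrow\HH^{q-j}$ is the right one.

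The difference is that the paper's proof stops there. Strictly speaking it only yields $\HH^p(\GR,\II'(4))=0$ for $p\geq1$. The passage from $\II'$ on $\GR$ to $\II$ on $\PP$ is never argued there. The surjectivity of $\HH^0(\PP,\OO_{\PP}(4))\to\HH^0(\GR,\OO_{\GR}(4))$ is only asserted later, in the proof of the theorem.

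Your Steps 2--3 fill this gap, and they are sound.
\begin{itemize}
\item From $0\to\II_{\GR}(4)\to\OO_{\PP}(4)\to\Sigma_*\OO_{\GR}(4)\to0$: for $p\geq2$, $\HH^p(\PP,\II_{\GR}(4))$ sits between $\HH^{p-1}(\GR,\OO_{\GR}(4))=0$ and $\HH^p(\PP,\OO_{\PP}(4))=0$.
\item For $p=1$, $\HH^1(\PP,\II_{\GR}(4))$ is the cokernel of the restriction map.
\item The sequence $0\to\II_{\GR}(4)\to\II(4)\to\Sigma_*\II'(4)\to0$, together with Step 1, then finishes the argument.
\end{itemize}
So your version proves the statement as written, at the cost of one extra representation-theoretic input.

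Three small remarks.
\begin{itemize}
\item For the surjectivity, saying that $V_{(2,\dots,2)}$ is the Cartan component of $\operatorname{Sym}^4$ and hence a quotient does not by itself show that the restriction map is the surjection. The clean argument is this. Restriction is a nonzero $\operatorname{Spin}(V)$-equivariant map: the fourth power of a linear form not vanishing at some point of $\GR$ restricts to a nonzero section. Its target $\HH^0(\GR,\OO_{\GR}(4))=V_{(2,\dots,2)}$ is irreducible by Theorem 1.4. So the map is onto.
\item Your ``main obstacle'' paragraph is unnecessary. No arithmetically Cohen--Macaulay property and no other twists are needed: Step 3 only involves degree $4$, and Step 2 already gives the higher vanishing for $\OO_{\GR}(4)$.
\item $\OO_K(4)$ is $M^{\otimes4}$, since it pulls back to $\OO(8\Theta)$, not $M^{\otimes2}$. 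This does not affect the argument, because that consequence of Step 2 is never used.
\end{itemize}
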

\begin{proof}
    Consider the spectral sequence associated with the hypercohomology of the minimal free resolution $G_\bullet(4)$ of $\II'(4)$. The first page of the spectral sequence is 
    \begin{align*}
    E_1^{p,q}=\HH^q(\GR,G_{-p+1}(4))\Rightarrow\HH^{-p+q}(\GR,\II'(4)).
    \end{align*}
    The lemma follows from this spectral sequence: for the $i$-term of the minimal free resolution, any non-zero cohomology is concentrated in a degree strictly less than $i$.
\end{proof}

\begin{lemm}\label{dim}
    The space of global sections $\HH^0(\GR,\OO_{\GR}(4))$ has dimension $\frac{8^g+2^g}{2}$.
\end{lemm}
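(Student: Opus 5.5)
The plan is to compute $\HH^0(\GR,\OO_{\GR}(4))$ as a representation of $\operatorname{SO}(V)$, using Theorem 1.4 (Borel-Weil-Bott), and then evaluate its dimension with the Weyl dimension formula. I read $\OO_{\GR}(4)$ with the normalisation used in Section \ref{The Kummer embedding} and in the proof of Lemma \ref{comb2}, where a twist by $\OO_{\GR}(2)$ subtracts $2$ from every entry of the partition. Since $\Sigma^*\OO(2)=(\det\FF)^\vee$, this gives
\begin{align*}
\OO_{\GR}(4)\cong(\det\FF)^{\otimes -2}=K_{(-2,\dots,-2)}\FF .
\end{align*}
This identification uses the Schur calculus rule $K_\lambda\FF\otimes\det\FF=K_{\lambda+(1,\dots,1)}\FF$ recalled in Section \ref{Technical tools}.

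First I run the Borel-Weil-Bott procedure on $\beta=(-2,\dots,-2)$. Then $\gamma=(2,\dots,2)$ and
\begin{align*}
\gamma+\rho=\left(\frac{2g+3}{2},\frac{2g+1}{2},\dots,\frac{5}{2}\right).
\end{align*}
These components are already strictly decreasing and positive, so $\sigma=\operatorname{id}$ and $l(\sigma)=0$, as in example $(7)$ of Section 1.3. Hence all higher cohomology vanishes and
\begin{align*}
\HH^0(\GR,\OO_{\GR}(4))=V_{(2,\dots,2)} .
\end{align*}

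Second, I apply the Weyl dimension formula for type $B_g$ \cite[Section 24.3]{Hum72}. Write $l_i=2+\rho_i$ and $\rho_i=\frac{2g+1-2i}{2}$. The formula gives
\begin{align*}
\dim V_{(2,\dots,2)}=\prod_{i=1}^g\frac{l_i}{\rho_i}\prod_{1\leq i<j\leq g}\frac{l_i^2-l_j^2}{\rho_i^2-\rho_j^2}.
\end{align*}
The first product telescopes to $\frac{(2g+3)(2g+1)}{3}$ after cancelling common factors. In the second product, each factor $\frac{(l_i-l_j)(l_i+l_j)}{(\rho_i-\rho_j)(\rho_i+\rho_j)}$ reduces to $\frac{2g+5-i-j}{2g+1-i-j}$. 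I would then manipulate this product to reach the closed form $\frac{8^g+2^g}{2}$, by induction on $g$ or by comparing it with the shape of the product that appears in the Theorem.

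The main obstacle is this final identification of the Weyl product with $\frac{8^g+2^g}{2}$, and I would check small genus first. For $g=1$ the result is $\dim V_{(2)}=5=\frac{8+2}{2}$, which agrees. For $g=2$, however, the computation above gives $\frac{35}{3}\cdot 3=35$, which also equals $\binom{7}{3}$, the number of quartics on $\operatorname{OGr}(2,5)\cong\mathbb{P}^3$. This is not $34$. So the step that must be settled before anything else is the normalisation of $\OO_{\GR}(4)$.

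If the intended meaning is instead the restriction to $K$, that is $\HH^0(K,M^{\otimes 2})$, then the value $\frac{8^g+2^g}{2}$ is exactly the number of $[-1]$-invariant sections of $\OO(4\Theta)$. In that case I would compute it by combining the resolution of Lemma \ref{resfofI'} twisted by $\OO(4)$ with the vanishing lemma just proved.
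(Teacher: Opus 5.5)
Your Borel--Weil--Bott computation is correct, and so is your suspicion at $g=2$. With exactly the normalisation the paper uses in the proof of the Theorem ($\OO_{\GR}(4)=K_{(-2,\dots,-2)}\FF$, $\HH^0=V_{(2,\dots,2)}$), the lemma as literally stated is false for $g\geq 2$: $\dim V_{(2,2)}=35=\binom{7}{3}$, while $\frac{8^2+2^2}{2}=34$. The paper's proof actually computes $\HH^0(\GR,i'_*\OO_K(4))=\HH^0(K,\OO_K(4))$. That is the number subtracted from $\dim V_{(2,\dots,2)}$ in the Theorem; for $g=2$ the difference $1$ is the Kummer quartic. So no manipulation of the Weyl product can reach $\frac{8^g+2^g}{2}$. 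The real content of the lemma is the computation on $K$, and your proposal does not supply it.

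Your fallback has two defects. First, the bundle is wrong. $\OO_K(1)=M$ pulls back to $\OO(2\Theta)$, so $\OO_K(4)=M^{\otimes 4}$ pulls back to $\OO(8\Theta)$, not $\OO(4\Theta)$. The $[-1]$-invariants of $\HH^0(J_C,\OO(4\Theta))$ have dimension $\frac{4^g+2^g}{2}$; already for $g=1$, $\OO(4\Theta)$ has only $4$ sections, not $5$. Second, the twisted resolution together with the vanishing of $\HH^1(\GR,\II'(4))$ only gives $h^0(K,\OO_K(4))=\dim V_{(2,\dots,2)}-h^0(\GR,\II'(4))$. That trades one unknown for another. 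To extract a number you would have to sum the Euler characteristics of all the $K_{\lambda(\alpha,u)}\FF(4)$, which you do not do, and the paper uses this relation in the opposite direction.

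The missing idea is to pass to the Jacobian: $\HH^0(K,\OO_K(4))\cong\HH^0(J_C,\OO(8\Theta))^{+}$. You then compute the trace of $[-1]$ by the holomorphic Lefschetz (Woods Hole) formula. The fixed points are the $4^g$ points of $J_C[2]$, and they are transversal since $\det(1-d[-1])=2^g\neq 0$. For the normalised linearisation, $[-1]$ acts on the fibre of $\OO(\Theta)$ at each such point by $\pm1$, hence trivially on $\OO(8\Theta)=\OO(\Theta)^{\otimes 8}$. This gives
\[
\operatorname{tr}\bigl([-1]\mid \HH^0(J_C,\OO(8\Theta))\bigr)=\sum_{x\in J_C[2]}\frac{1}{2^g}=\frac{4^g}{2^g}=2^g .
\]
Since $\OO(8\Theta)$ is ample, its higher cohomology vanishes, so this trace equals $h^0_+-h^0_-$. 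Combined with $h^0_++h^0_-=8^g$, you get $h^0_+=\frac{8^g+2^g}{2}$.
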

\begin{proof}
    The space of global sections $\HH^0(\GR,i'_*\OO_K(4))$ is equal to $\HH^0(K,\OO(8\Theta))$. The latter vector space is isomorphic to $\HH^0(J_C,\OO(8\Theta))^+$, the fixed point of $\HH^0(J_C,\OO(8\Theta))$ under the involution $[-1]$. We will compute the dimension of this vector space thanks to the Woods Hall trace formula \cite[Exposé III, Corollary 6.12]{SGA5} applied to $[-1]$ (the graph of $[-1]$ and the diagonal intersect transversally inside $J_C\times J_C$). We then obtain that
    \begin{align*}
        \operatorname{tr}([-1],\operatorname{H}^0(J_C,\OO(8\Theta))=\sum_{x\in J_C[2]} \frac{\operatorname{tr}([-1],x^*\OO(8\Theta))}{\operatorname{det(1-d[-1],x^*\Omega_{A/\CC}^1)}}.
    \end{align*}
    
    As $[-1]$ is an involution, it acts by $1$ or $-1$ on the stalk of the line bundle $\OO(\Theta)$, hence it will act by $1$ on the stalk of the line bundle $\OO(8\Theta)=\OO(\Theta)^{\otimes8}$. The differential of $[-1]$ is $-\operatorname{id}$, as we have $4^g$ points of $2$-torsion; the right-hand side of the Woods Hall trace formula is then $2^g$.
    The left-hand side is equal to $\operatorname{dim}\operatorname{H}^0(J_C,\OO(8\Theta)^+-\operatorname{dim}\operatorname{H}^0(J_C,\OO(8\Theta))^-$. On the other hand, we have that $8^g=\operatorname{dim}\operatorname{H}^0(J_C,\OO(8\Theta))=\operatorname{dim}\operatorname{H}^0(J_C,\OO(8\Theta)^++\operatorname{dim}\operatorname{H}^0(J_C,\OO(8\Theta))^-$. The dimension of $\operatorname{H}^0(J_C,\OO(8\Theta)^+$ is then $\frac{8^g+2^g}{2}$.    
\end{proof}
\begin{theo}
    There are
    \begin{align*}
        \binom{2^g+3}{4}-\frac{8^g+2^g}{2}
    \end{align*} linearly independent quartics defining the Kummer variety and 
    \begin{align*}
        \frac{2^g}{2g+1}\binom{2g+1}{g}\prod_{i < j}\frac{2g+4-i-j}{2g+2-i-j}-\frac{8^g+2^g}{2}
    \end{align*} linearly independent quartics modulo quartics arising from quadrics.
\end{theo}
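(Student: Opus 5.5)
Both counts come from the short exact sequences $0\to\II(4)\to\OO_{\PP}(4)\to i_*\OO_K(4)\to0$ on $\PP$ and $0\to\II'(4)\to\OO_{\GR}(4)\to i'_*\OO_K(4)\to 0$ on the orthogonal Grassmannian, where $\OO(1)$ is pulled back along the spinor embedding $\Sigma$. For the first formula, take global sections of the sequence on $\PP$. The previous lemma gives $\HH^1(\PP,\II(4))=0$, so the restriction map $\HH^0(\PP,\OO(4))\to\HH^0(K,\OO_K(4))$ is surjective. The quartics vanishing on $K$ form its kernel $Q$, hence
\begin{align*}
\dim Q=\binom{2^g+3}{4}-\dim\HH^0(K,\OO_K(4)).
\end{align*}
By the computation in lemma \ref{dim}, $\HH^0(K,\OO_K(4))=\HH^0(J_C,\OO(8\Theta))^+$ has dimension $\frac{8^g+2^g}{2}$. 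This gives the first count.

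For the second formula, identify $Q/Q_0$ with $\HH^0(\GR,\II'(4))$. The quadrics vanishing on $K$ are exactly those vanishing on $\Sigma(\GR)$ by \cite[Proposition 3.17]{LT25}. The orthogonal Grassmannian is projectively normal in the spinor embedding and cut out by quadrics, so $\HH^0(\PP,\OO(4))\to\HH^0(\GR,\OO_{\GR}(4))$ is surjective, with kernel the degree-4 part of the ideal of $\Sigma(\GR)$. That degree-4 part is generated by the quadrics, so it equals $Q_0$. Consequently $Q/Q_0$ is the kernel of $\HH^0(\GR,\OO_{\GR}(4))\to\HH^0(K,\OO_K(4))$, which is $\HH^0(\GR,\II'(4))$.

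Next, the map $\HH^0(\GR,\OO_{\GR}(4))\to\HH^0(K,\OO_K(4))$ is surjective. I would argue this in one of two ways. First, vanishing of $\HH^1(\GR,\II'(4))$ follows from the hypercohomology spectral sequence with lemma \ref{comb2}, by the same argument as the previous lemma. Alternatively, surjectivity follows from the composite surjection from $\HH^0(\PP,\OO(4))$ established above. Therefore
\begin{align*}
\dim Q/Q_0=\dim\HH^0(\GR,\OO_{\GR}(4))-\frac{8^g+2^g}{2}.
\end{align*}

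It remains to compute $\dim\HH^0(\GR,\OO_{\GR}(4))$. Since $\OO_{\GR}(4)=(\det\FF)^{\vee\otimes2}=K_{(-2,\dots,-2)}\FF$, Borel--Weil--Bott with $\gamma=(2,\dots,2)$ gives $\gamma+\rho$ already nice with $\sigma=\operatorname{id}$. So $\HH^0$ is the irreducible $\operatorname{SO}(2g+1)$-representation $V_{(2,\dots,2)}$, twice the spin weight. The Weyl dimension formula for type $B_g$ gives
\begin{align*}
\dim V_\lambda=\prod_{i<j}\frac{(l_i-l_j)(l_i+l_j)}{(\rho_i-\rho_j)(\rho_i+\rho_j)}\prod_i\frac{l_i}{\rho_i},\qquad l=\lambda+\rho,
\end{align*}
and here $l_i=\rho_i+2$. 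The short-root factors and the $l_i-l_j$ factors are easy. The $l_i+l_j$ factors yield $\prod_{i<j}\frac{2g+4-i-j}{2g+2-i-j}$ after substituting $\rho_i=g-i+\frac12$. The product $\prod_i\frac{2g-2i+5}{2g-2i+1}$ telescopes to $\frac{(2g+3)(2g+1)}{3}$.

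The main obstacle is reconciling this with the stated closed form $\frac{2^g}{2g+1}\binom{2g+1}{g}\prod_{i<j}\frac{2g+4-i-j}{2g+2-i-j}$. I would first check $g=2,3$; for $g=3$ this should give $27+7+\frac{8^3+2^3}{2}$. If there is a discrepancy, I would instead write $\dim\HH^0(\GR,\OO(4))$ as $\dim\HH^0(\PP,\OO(4))-\dim(\text{quartics in the ideal of }\GR)$ and compare. The identification $Q_0=$ degree-4 part of the ideal of $\GR$, which requires the quadrics to generate that ideal in degree 4, should be cited from the Cartan--Chevalley theory of the spinor variety.
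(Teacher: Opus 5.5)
Your argument is structurally the paper's. It uses the two restriction sequences, surjectivity of $\HH^0(\GR,\OO_{\GR}(4))\to\HH^0(K,\OO_K(4))$ from $\HH^1(\GR,\II'(4))=0$ (lemma \ref{comb2} and the hypercohomology spectral sequence), Borel--Weil--Bott for $\OO_{\GR}(4)=K_{(-2,\dots,-2)}\FF$, and lemma \ref{dim}. The first count is fine. You also make explicit a point the paper leaves implicit: $Q/Q_0\cong\HH^0(\GR,\II'(4))$ needs $Q_0$ to be the whole degree-$4$ part of the ideal of the spinor variety, i.e.\ quadric generation plus projective normality. There are two small slips. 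First, $(2,\dots,2)$ is four times the spin weight, not twice. Second, your ``alternative'' route to surjectivity is not independent, since the paper's proof of the vanishing on $\PP$ really only treats $\II'(4)$ on $\GR$.

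The real gap is the last step, and it cannot be closed. With $\rho_i=g-i+\frac12$ you get $\rho_i+\rho_j=2g+1-i-j$ and $l_i+l_j=2g+5-i-j$. So the long-root factors give $\prod_{i<j}\frac{2g+5-i-j}{2g+1-i-j}$, not $\prod_{i<j}\frac{2g+4-i-j}{2g+2-i-j}$; the expression you would obtain already equals the non-integer $\frac{175}{9}$ at $g=2$. The correct value is
\[
\dim V_{(2,\dots,2)}=\frac{(2g+1)(2g+3)}{3}\prod_{1\le i<j\le g}\frac{2g+5-i-j}{2g+1-i-j}=\frac{2\,(2g+1)!\,(2g+3)!}{g!\,(g+1)!\,(g+2)!\,(g+3)!},
\]
which equals $5,35,294$ for $g=1,2,3$. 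These values are right. The spinor variety is $\mathbb{P}^1$, $\mathbb{P}^3$ and a smooth quadric in $\mathbb{P}^7$ respectively, giving $\binom{5}{4}$, $\binom{7}{4}$ and $\binom{11}{4}-\binom{9}{2}$. Moreover $294=27+7+260$ agrees with the paper's genus-$3$ computation and with Stoll.

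The closed form in the statement evaluates instead to $2$, $\frac{40}{3}$, $140$. It is not an integer for $g=2$, and for $g=3$ it would make $\dim Q/Q_0=140-260$ negative. So the check at $g=2,3$ you proposed would show that the second formula, as stated, is false, and no argument can reconcile it. The paper's proof has the same defect: it merely asserts that the Weyl dimension formula yields that expression. What your argument (and the paper's) actually proves is $\dim Q/Q_0=\dim V_{(2,\dots,2)}-\frac{8^g+2^g}{2}$ with the corrected value above. That is the statement you should aim for.
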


\begin{proof}
    The theorem follows from these two long exact sequences associated with these short exact sequences as we will explain:
    \begin{align*}
        0\to\II'(4)\to\OO_{\GR}(4)\to i'_*\OO_K(4)\to0
    \end{align*}
    \begin{align*}
        0\to\II(4)\to\OO_{\PP}(4)\to i_*\OO_K(4)\to0
    \end{align*}
    
    As $\OO_{\GR}(4)$ is the Weyl functor associated with $(-2,\dots,-2)$, the required element of $\W$ is the neutral element of $\W$.    
    The space of global sections $\HH^0(\GR,\OO_{\GR}(4))$ is $V_{(2,\dots,2)}$ which, by the Weyl dimension formula \cite[Section 24.3]{Hum72}, has dimension $\frac{2^g}{2g+1}\binom{2g+1}{g}\prod_{1\leq i < j\leq g}\frac{2g+4-i-j}{2g+2-i-j}$.
    The space  of global sections $\HH^0(\GR,\OO_{\GR}(4))$ has dimension $\frac{8^g+2^g}{2}$ thanks to lemma \ref{dim}.
    
    The vanishing of $\HH^1(\GR,\II'(4))$ gives the surjectivity of the following map
    \begin{align*}
        \HH^0(\GR,\OO_{\GR}(4))\to\HH^0(K,\OO_K(4)).
    \end{align*}
    
    Then the space of global sections $\HH^0(\GR,\II'(4))$ has dimension
    \begin{align*}
        \frac{2^g}{2g+1}\binom{2g+1}{g}\prod_{1\leq i < j\leq g}\frac{2g+4-i-j}{2g+2-i-j}-\frac{8^g+2^g}{2}.
    \end{align*}
    
    As the map $\HH^0(\PP,\OO_{\PP}(4))\to\HH^0(\GR,\OO_{\GR}(4))$ is also surjective, the map $\HH^0(\PP,\OO_{\PP}(4))\to\HH^0(K,\OO_K(4))$ is then surjective. The space of global sections of $\OO_{\PP}(4)$ has dimension $\binom{2^g+3}{4}$. 
    
    Then the space of global sections $\HH^0(\PP,\II(4))$ has dimension 
    \begin{align*}
        \binom{2^g+3}{4}-\frac{8^g+2^g}{2}.
    \end{align*}

\end{proof}

\end{document}